\newtheorem{thm}[figure]{Theorem}
\newtheorem{lem}[figure]{Lemma}
\newtheorem{prp}[figure]{Proposition}
\newtheorem{prb}[figure]{Problem}
\theoremstyle{definition}
\newtheorem{dfn}[figure]{Definition}
\theoremstyle{remark}
\newtheorem{rmk}[figure]{Remark}
\NewDocumentCommand{\Aut}{}{\operatorname{Aut}}
\NewDocumentCommand{\Wh}{}{\operatorname{Wh}}
\NewDocumentCommand{\CCLR}{}{strongly reduced}
\NewDocumentCommand{\RCR}{}{weakly reduced}
\NewDocumentCommand{\Rone}{}{partially reduced}
\NewDocumentCommand{\defnsub}{}{\textit}
\newlist{steps}{enumerate}{3}
\setlist[steps]{label=Step \arabic*.,ref=\arabic*,leftmargin=15mm}
\begin{document}

\title{Orbit-blocking words in free groups}

\author[]{Lucy Koch-Hyde}
\address{Department of Mathematics, CUNY Graduate Center, New York,
NY 10016}  \email{lhyde@gradcenter.cuny.edu}

\author[]{Siobh\'an O'Connor}
\address{Department of Mathematics, CUNY Graduate Center, New York,
NY 10016}  \email{doconnor@gradcenter.cuny.edu}

\author[]{
  \'Eamonn Olive
}
\email{ejolive97@gmail.com}

\author[]{Vladimir Shpilrain}
\address{Department of Mathematics, The City College of New York, New York,
NY 10031} \email{shpilrain@yahoo.com}

\begin{abstract}
By strengthening known results about primitivity-blocking words in free groups, we prove that for any element $w$ of a free group of finite rank, there are words that cannot be subwords of any cyclically reduced automorphic image of $w$.
This has implications for the average-case complexity of a variant of Whitehead's problem.

\end{abstract}

\maketitle

\section{Introduction}

Let $F_r$ be a free group with a free generating set (i.e.\ {\it basis}) $x_1, \ldots, x_r$.
Call an element $u \in F_r$ {\it primitive} if there is an automorphism of $F_r$ that takes $x_1$ to $u$.

For a given element $w \in F_r$, we say a word $v$ is $w$-orbit-blocking (or just orbit-blocking if $w$ is clear from the context) if for any automorphism $\varphi \in \Aut(F_r)$, the word $v$ is not a subword of the cyclic reduction of $\varphi(w)$.

{\it Primitivity-blocking} words are the same as $x_1$-orbit-blocking.
The existence of primitivity-blocking words easily follows from Whitehead's observation that the Whitehead graph of any cyclically reduced primitive element is either disconnected or has a cut vertex.
More about this in Section \ref{Back}.

We first show that primitivity-blocking words cannot appear as subwords of some wider classes of primitive elements that are not necessarily cyclically reduced; see Lemma \ref{lem:big boy}, Lemma \ref{thm:RCR no pbsw}, and Theorem \ref{Rone no pbsw}.

We give new examples of primitivity-blocking words and an algorithm for detecting primitivity-blocking words in $F_2$ (Theorem \ref{F2alg}).
Algorithmically detecting primitivity-blocking words in $F_r$ where $r>2$ remains an open problem.

Based on Lemma \ref{lem:big boy}, we show in Section \ref{block2} that for any $w \in F_r$, there are $w$-orbit-blocking words (in fact, depending only on the length of $w$), thus giving a complete solution of Problem (F40) from \cite{Problems}.
Previously \cite{rank2}, we gave a solution in the special case $r=2$ based on a very special and explicit description of bases in $F_2$ from \cite{Cohen}.

In Section \ref{complexity}, we apply our main result to show that the average-case time complexity of the following variant of Whitehead's problem is constant with respect to $n$: given a fixed $u \in F_r$, decide, on an input $v \in F_r$ of length $n$, whether or not $v$ is an automorphic image of $u$.

\section{Background}\label{Back}

The {\it Whitehead graph} $\Wh(w)$ of $w \in F_r$ has $2r$  vertices that correspond to the generators and their inverses.
For each occurrence of a subword $x_i x_j$ in the word $w \in F_r$, there is an edge in $\Wh(w)$ that connects the vertex $x_i$ to the vertex $x_j^{-1}$; if $w$ has a subword $x_i x_j^{-1}$, then there is an edge connecting $x_i$ to $x_j$, etc.
There is one more edge (the external edge): this is the edge that connects the vertex corresponding to the last letter of $w$ to the vertex corresponding to the inverse of the first letter.

It was observed by Whitehead himself in his \textit{cut vertex lemma} (see \cite{Wh2}) that the Whitehead graph of any cyclically reduced primitive element $w$ is either disconnected or has a cut vertex, i.e.\ a vertex that, having been removed from the graph together with all incident edges, increases the number of connected components of the graph.
A short and elementary proof of this result was recently given in \cite{Heusener}, and a more general case of primitive elements in subgroups of $F_r$ was recently treated in \cite{Ascari}.

Thus, for example, if the Whitehead graph of $w$ (without the external edge) is complete (i.e., any two vertices are connected by at least one edge), then $w$ is primitivity-blocking because in this case, if $w$ is a subword of $u$, then the Whitehead graph of $u$, too, is complete and therefore is connected and does not have a cut vertex.
Here are some examples of primitivity-blocking words: $x_1^n x_2^n \cdots x_r^n x_1$ (for any $n \ge 2$), $[x_1, x_2][x_3, x_4]\cdots [x_{n-1}, x_{n}]x_1^{-1}$ (for an even $n$), etc.
Here $[x, y]$ denotes $x^{-1}y^{-1}xy$.

Primitivity blocking was used for the first time in \cite{Bardakov} to show that for any $k$, there is $w \in F_r$ that is not a product of $\le k$ primitive elements.

In \cite{rank2}, we showed that for any $w \in F_2$, there are $w$-orbit-blocking words.
This was based on an explicit description of bases (equivalently, of automorphisms) of $F_2$ from \cite{Cohen}.

\section{Blocking primitivity}\label{block}

\begin{dfn}
Call a basis of $F_r$ \defnsub{\CCLR} if there is no element $g \in F_r$ such that conjugating each element of the basis by $g$ decreases the sum of the lengths of the basis elements.
\end{dfn}

Note that every basis of $F_r$ has a \CCLR\ basis in its conjugacy class.
Since there is a one-to-one correspondence between bases and automorphisms of $F_r$, we will also refer to \CCLR\ automorphisms as those corresponding to \CCLR\ bases.

Then we have the following technical result of independent interest.

\begin{lem}
  \label{lem:big boy}
  Let $B$ be a \CCLR\ basis of $F_r$.
  No primitivity-blocking word appears as a subword of any element in $B$.
\end{lem}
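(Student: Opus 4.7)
The plan is to argue by contradiction: suppose some primitivity-blocking word $v$ appears as a subword of an element $b \in B$. Write $b$ in fully reduced form as $b = g c g^{-1}$, where $c$ is cyclically reduced (so $c$ is the cyclic reduction of $b$). Since $b$ lies in a basis it is primitive, and hence its conjugate $c$ is a cyclically reduced primitive element. By the definition of primitivity-blocking, $v$ cannot occur as a subword of $c$; so the fixed occurrence of $v$ inside $b$ must overlap either the prefix $g$ or the suffix $g^{-1}$, and in particular $g \ne 1$.

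From here the strategy is to exhibit $h \in F_r$ whose conjugation action on $B$ strictly decreases $\sum_i |b_i|$, contradicting that $B$ is \CCLR. The natural candidate for $h$ is a prefix of $g^{-1}$ (equivalently, a suffix of $g$ inverted): conjugating $b$ by $g^{-1}$ cancels both copies of the conjugator and shortens $b$ by $2|g|$, while conjugating by a length-$j$ prefix of $g^{-1}$ shortens $b$ by $2j$. One would then case-split on where $v$ sits inside $b = g c g^{-1}$ --- entirely within $g$, straddling the $g$--$c$ boundary, straddling the $c$--$g^{-1}$ boundary, entirely within $g^{-1}$, or spanning the whole of $c$ when $|c|$ is small --- and pick $h$ accordingly. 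By the mirror symmetry between $g$ and $g^{-1}$, the cases on the right of $b$ reduce to those on the left after replacing $v$ by $v^{-1}$.

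The main obstacle is verifying that for some valid choice of $h$, the total length truly decreases. A priori each $|h b_i h^{-1}|$ (for $i \ne 1$) could grow by as much as $2|h|$, wiping out the $2|h|$ savings on $b$. This is exactly where the primitivity-blocking property of $v$ must enter essentially: the heavy connectivity of $\Wh(v)$ (the feature that makes $v$ primitivity-blocking in the first place) is inherited by $\Wh(b)$, and I expect the argument to combine this local Whitehead-graph data with the global constraint that $\{b, b_2, \ldots, b_r\}$ is a basis of $F_r$ to show that the other basis elements cannot all simultaneously begin and end with the specific letters of $g$ that would be required to saturate the $2|h|$ bound. Pinning down this combinatorial reconciliation of the local structure near $v$ with the global strong-reducedness condition is where the real work of the proof lies.
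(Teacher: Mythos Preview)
Your setup is fine, and you correctly isolate the real difficulty: once $b$ is not cyclically reduced, conjugating by a prefix of $g^{-1}$ shortens $b$ but may lengthen the other basis elements, and you need a reason why the total still drops. However, your proposed resolution --- extracting enough structure from $\Wh(v)$ to constrain the first and last letters of the other $b_i$ --- is not developed, and in fact it is not how the argument goes. The primitivity-blocking hypothesis on $v$ plays no role in this step at all, so looking for it there is a dead end.

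The paper's proof uses a clean dichotomy that bypasses the obstacle entirely. Let $p$ be the first letter of $u$ (so $p^{-1}$ is the last). Either every element of $B$ begins with $p$ or ends with $p^{-1}$; then conjugation by $p$ shortens $u$ by $2$ and does not lengthen any other element, contradicting strong reduction directly --- no information about $v$ needed. Or some $v' \in B$ neither begins with $p$ nor ends with $p^{-1}$; then the product $uv'$ has no cancellation at the junction and is cyclically reduced, and since $u,v'$ lie in a common basis, $uv'$ is primitive. Thus $u$ sits inside a cyclically reduced primitive word, so it cannot contain a primitivity-blocking subword. The key idea you are missing is this second branch: rather than forcing a length-decreasing conjugation, one manufactures a \emph{longer} cyclically reduced primitive word containing $u$ by multiplying by a suitable fellow basis element.
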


\begin{proof}
  Fix $u$ in $B$.
  We will show it does not contain a primitivity-blocking subword.
  If $u$ is cyclically reduced, then it cannot contain a primitivity-blocking subword by definition.
  Suppose now that $u$ is not cyclically reduced.

  Let $p$ be the first letter appearing in $u$.
  Since $u$ is not cyclically reduced, $p^{-1}$ is the last letter appearing in $u$.
  Now consider two cases:

\medskip

\noindent {\bf 1.} Every element of the basis $B$ either starts with $p$ or ends with $p^{-1}$ (or both).
In this case, conjugating every element of $B$ by $p$ (i.e.\ taking every $z \in B$ to $p^{-1}zp$) will decrease the total length of elements of $B$, contradicting the fact that $B$ is \CCLR.
\medskip

\noindent {\bf 2.} There is an element $v \in B$ that neither starts with $p$ nor ends with $p^{-1}$.
Then consider the element $w=uv$.
Since $v$ has neither left nor right cancellation with $u$, this $w$ is cyclically reduced.
Furthermore, since both $u$ and $v$ belong to the same basis of $F_r$, $w$ must be primitive in $F_r$.
Since $w$ is primitive and cyclically reduced, it, and thus $u$ as well, cannot contain a primitivity-blocking subword.
\end{proof}

Even though Lemma \ref{lem:big boy} suffices to prove our main result (Theorem \ref{blocking} in Section \ref{block2}), we believe the following generalization might be useful, too.

\begin{dfn}
  We call a basis $B$ \defnsub{\RCR} if there is no letter $p$ such that every element of $B$ begins with $p$ and ends with $p^{-1}$.
\end{dfn}

First we need the following

\begin{lem}
  \label{lem:encapsulation}
  Suppose $w,v \in F_r$ such that $w$ and $v$ are not powers of the same  element.
  Let $u(m) = w^mvw^{-m}$.
  Then there exists $m \geq 0$ such that the following are true:
  \begin{enumerate}
    \item The first letters of $u(m)$ and $w$ are the same.
    \item The last letters of $u(m)$ and $w^{-1}$ are the same.
  \end{enumerate}
\end{lem}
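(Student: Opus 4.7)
The plan is to first reduce to the case where $w$ is cyclically reduced. Write $w = g \hat w g^{-1}$ with $\hat w$ cyclically reduced and the outer concatenation already a reduced word; then $u(m) = g(\hat w^m \tilde v \hat w^{-m}) g^{-1}$, where $\tilde v$ is the reduced form of $g^{-1} v g$. The reducedness of $g \hat w g^{-1}$ as a word implies that once the inner word $\hat w^m \tilde v \hat w^{-m}$ is shown to start with the first letter of $\hat w$ and end with its inverse, no cancellation occurs at the two outer joints, so the first and last letters of $u(m)$ are exactly the first letter of $g$ and the last letter of $g^{-1}$, which are the first letter of $w$ and the last letter of $w^{-1}$. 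Because $[\hat w, \tilde v]$ is conjugate to $[w,v]$, the hypothesis transfers, and it suffices to prove the lemma for cyclically reduced $w$.

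Assuming $w$ cyclically reduced, decompose $v = w^{-a} v^\circ w^b$ uniquely with $a, b \ge 0$ maximal such that the concatenation is reduced; equivalently, $v^\circ$ is a reduced word that neither starts with $w^{-1}$ nor ends with $w$. The hypothesis that $w, v$ are not powers of a common element forces $v$ not to be a power of the root of $w$, so $v^\circ \ne e$. For $m \ge \max(a, b)$ one has $u(m) = w^{m-a} v^\circ w^{-(m-b)}$ as an element. Let $c_L, c_R$ denote the cancellation lengths at the two inner joints; the conditions on $v^\circ$ force $c_L, c_R < |w|$. In the case $c_L + c_R < |v^\circ|$, the cancellations leave a nontrivial middle piece of $v^\circ$, and maximality of $c_L, c_R$ prevents any further cancellation; the reduced form of $u(m)$ is then an explicit concatenation of a nonempty prefix of $w^{m-a}$ (starting with the first letter $p$ of $w$), the trimmed $v^\circ$, and a nonempty suffix of $w^{-(m-b)}$ (ending with $p^{-1}$), which gives both conclusions for every $m > \max(a, b)$.

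The main obstacle is the overlapping case $c_L + c_R \ge |v^\circ|$, in which the boundary cancellations consume all of $v^\circ$ and further reduction can cascade between the surviving prefix of $w^{m-a}$ and suffix of $w^{-(m-b)}$, potentially consuming whole $w \cdot w^{-1}$ pairs. I would dispatch this by establishing that the cascade terminates in a bounded number of steps, independent of $m$. Since $[w, v] \ne 1$, the conjugates $\{w^m v w^{-m}\}_m$ are pairwise distinct, and the finiteness of reduced words of any given length in $F_r$ forces $|u(m)| \to \infty$; this rules out an unbounded cascade, since such a cascade would collapse $u(m)$ to bounded length (or to a pure power of $w$, contradicting that $v$ is not a power of the root of $w$). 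Combined with the fact that cyclic reducedness of $w$ blocks an identity $w_i = w_j^{-1}$ at adjacent or cyclically adjacent positions that would be required for the cascade to continue across a full $w$-copy boundary, one obtains a uniform bound on the cascade depth. Consequently, for all sufficiently large $m$ the reduced form of $u(m)$ retains a nonempty prefix of $w^{m-a}$ and suffix of $w^{-(m-b)}$, yielding first letter $p$ and last letter $p^{-1}$ as required.
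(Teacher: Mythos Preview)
Your reduction to cyclically reduced $w$ is clean and correct, and the non-overlap case $c_L+c_R<|v^\circ|$ is handled properly. The problem is the overlap case $c_L+c_R\ge|v^\circ|$, which is where all the content lies and which you have not actually proved. You correctly observe that the $u(m)$ are pairwise distinct and hence $|u(m)|\to\infty$, but the inference ``this rules out an unbounded cascade'' is not justified: you would need to show that if the cascade depth were unbounded in $m$ then $|u(m)|$ fails to tend to infinity, and you have not established that link. The closing sentence about cyclic reducedness of $w$ ``blocking an identity $w_i=w_j^{-1}$ at adjacent or cyclically adjacent positions'' is an intuition, not an argument; cyclic reducedness only rules out cancellation at the seam between two consecutive copies of $w$, not the more intricate interaction between a truncated prefix of $w^{k}$ and a truncated suffix of $w^{-l}$ that arises once $v^\circ$ has been consumed. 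As written, the overlap paragraph is a sketch of what one hopes is true rather than a proof.

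The paper's argument avoids this difficulty entirely and is worth comparing. It does not reduce to cyclically reduced $w$ and does not decompose $v$. Instead it argues by contradiction: if no $m$ works, then for every $m$ either $w$ or $w^{-1}$ cancels completely when forming $u(m+1)=w\,u(m)\,w^{-1}$, which forces $|u(m+1)|\le|u(m)|$. The non-increasing length sequence is eventually constant, so the $u(m)$ eventually cycle among finitely many words; injectivity of conjugation then pushes the cycle back to $m=0$, so $v$ itself lies in a finite conjugation orbit under $w$, and one reads off that conjugation by $w$ acts as a cyclic rotation of $v$, forcing $w$ and $v$ to be powers of a common element. This pigeonhole/periodicity trick replaces your entire cascade analysis with a two-line length estimate, and is both shorter and more robust than the direct structural route you attempted.
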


\begin{proof}
  Suppose by way of contradiction that for all $m$, $w$ or $w^{-1}$ completely cancels in $u(m+1) = wu(m)w^{-1}$.
  It follows that $|u(m+1)| \leq 2|w|+|u(m)|-2|w| = |u(m)|$.
  Since the sequence $|u(i)|_{i=0}^\infty$ is non-increasing and bounded below, there must be some $k$ such that for all $i \geq k$, $|u(i)| = |u(i+1)|$.
  That is, the sequence must eventually become constant.
  Thus, $\{u(i)\}_{i=0}^\infty$ must eventually form a cycle of equal length words.
  Since conjugation is a one-to-one map, the entire sequence must be a cycle, and so $u(0) = v$ must be a member of the cycle.
  Therefore, each conjugation rotates $v$ by $m$ letters.
  Thus, $w$ and $v$ are powers of the same element, a contradiction with the condition of the lemma.
\end{proof}

\begin{lem}
  \label{thm:RCR no pbsw}
  Let $B$ be a \RCR\ basis of $F_r$, where $r\geq 3$, and let $w \in B$.
Then $w$ does not contain any primitivity-blocking subwords.
\end{lem}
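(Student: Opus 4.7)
The argument is by contradiction. Suppose some $w \in B$ contains a primitivity-blocking subword. Any cyclically reduced primitive element cannot contain a primitivity-blocking subword (by definition), so $w$ is not cyclically reduced; writing $p$ for the first letter of $w$, the last letter must then be $p^{-1}$. The plan is to construct, via Nielsen moves starting from $B$, a basis $B^\sharp$ of $F_r$ containing $w$ together with some element $v$ that neither begins with $p$ nor ends with $p^{-1}$. Given such $B^\sharp$, the product $wv$ has no cancellation at the central junction (the last letter of $w$ is $p^{-1}$, the first of $v$ is not $p$) and is cyclically reduced (the first letter is $p$, the last is not $p^{-1}$). Moreover $wv$ is a basis element of $(B^\sharp \setminus \{v\}) \cup \{wv\}$, hence primitive. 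So $w$ — and thus its supposed primitivity-blocking subword — sits inside a cyclically reduced primitive element, contradicting the definition of primitivity-blocking.

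To construct $B^\sharp$, I would start from the weakly reduced hypothesis, which supplies some $v^* \in B$ failing at least one of the two conditions "begins with $p$" and "ends with $p^{-1}$". If $v^*$ fails both, take $B^\sharp := B$ and $v := v^*$ and we are done. Otherwise $v^*$ fails exactly one; by symmetry (replacing $v^*$ by its inverse in the basis if needed, which is a Nielsen move) we may assume $v^*$ begins with $p$ and ends with a letter $s \neq p^{-1}$. Now the hypothesis $r \geq 3$ enters: it lets me pick a third basis element $v_1 \in B \setminus \{w, v^*\}$.

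Apply \lemma{encapsulation} with $v^{*-1}$ in the role of $w$ and $v_1$ in the role of $v$. Since $v^{*-1}$ and $v_1$ belong to a common basis of $F_r$, they generate a free subgroup of rank two and in particular are not powers of a common element, so the hypothesis is met. The lemma yields $m \geq 0$ for which $u(m) := v^{*-m} v_1 v^{*m}$ begins with the first letter of $v^{*-1}$, namely $s^{-1}$, and ends with the last letter of $v^*$, namely $s$. Because $s \neq p^{-1}$ we also have $s^{-1} \neq p$; so $u(m)$ neither begins with $p$ nor ends with $p^{-1}$. The Nielsen automorphism replacing $v_1$ by $u(m)$ (conjugation of $v_1$ by $v^{*m}$) leaves $w$ fixed, producing the desired $B^\sharp$ with $v := u(m)$.

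The main obstacle is this encapsulation step, and its key subtlety is the correct choice of conjugating element in \lemma{encapsulation}: conjugating by $w$ itself would produce endpoints $(p, p^{-1})$ — exactly the forbidden pair — whereas conjugating by $v^{*-1}$ yields endpoints $(s^{-1}, s)$, which avoids both $p$ and $p^{-1}$. The availability of a suitable $v^*$ is guaranteed precisely by the weakly reduced hypothesis, and the availability of a spare basis element $v_1$ on which to perform the conjugation (without disturbing $w$) is guaranteed by $r \geq 3$.
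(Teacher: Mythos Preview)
Your proof is correct and follows essentially the same route as the paper's: reduce to the case where $w$ begins with $p$ and ends with $p^{-1}$, use the weakly-reduced hypothesis to locate a basis element whose (inverse if necessary) first letter avoids $p$, and then invoke \lemma{encapsulation} on a third basis element (available since $r\ge 3$) to manufacture a conjugate with both endpoints away from $p,p^{-1}$, so that its product with $w$ is a cyclically reduced primitive containing $w$. The only cosmetic differences are that you phrase things as a contradiction and split off the easy case where $v^*$ already avoids both $p$ and $p^{-1}$, whereas the paper absorbs that into the encapsulation step directly.
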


\begin{proof}
  If $w$ is cyclically reduced then it does not contain a primitivity-blocking subword.
  Thus, assume that $w$ begins with $p$ and ends with $p^{-1}$.

  Since $B$ is \RCR, there exists $u \in B$ such that it either does not begin with $p$ or it does not end with $p^{-1}$.
  We will assume, without loss of generality, that $u$ does not begin with $p$.
  Let $v \in B$ be an element such that $v \neq w$ and $v \neq u$.
  Take the word $z = u^mvu^{-m}$ with $m$ given by Lemma \ref{lem:encapsulation}.
  The word $w z$ is a primitive cyclically reduced word containing $w$.
\end{proof}

Note that in $F_2$, a \RCR\ basis can contain a primitivity-blocking word.
For instance, the basis $\{ aba^{-1}, ba^{-1} \}$ is \RCR, but contains $aba^{-1}$, which is primitivity-blocking in $F_2$.

Lemma \ref{thm:RCR no pbsw} can be generalized further, as follows.

\begin{dfn}
  \label{dfn:R1}
  We say that a basis $B$ is \defnsub{\Rone} if there exists a proper subset $X$ of $B$ such that there are non-trivial words $w, v \in \langle X\rangle$ that do not begin with the same letter.
\end{dfn}

\begin{thm}\label{Rone no pbsw}
  Let $B$ be a \Rone\ basis of $F_r$, where $r\geq 3$, and let $u \in B$.
  Then $u$ does not contain a primitivity-blocking subword.
\end{thm}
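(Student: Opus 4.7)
The approach mimics the proof of Lemma \ref{thm:RCR no pbsw}. If $u$ is cyclically reduced, there is nothing to prove; otherwise $u$ begins with some letter $p$ and ends with $p^{-1}$, and the goal is to exhibit a primitive cyclically reduced word $P = uz$ containing $u$ as a prefix. For $P$ to be cyclically reduced, $z$ should neither begin with $p$ nor end with $p^{-1}$; for $P$ to be primitive, it suffices that $z$ lie in $\langle B' \setminus \{u\}\rangle$ for some basis $B'$ Nielsen-equivalent to $B$ containing $u$, since then $u \mapsto uz$ is an elementary Nielsen transformation of $B'$. The partial-reduction data provides a proper $X \subsetneq B$ and an element $w \in \langle X\rangle$ whose first letter is $\neq p$ (after swapping the roles of $w$ and $v$ if necessary).

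If some partial-reduction witness can be chosen with $u \notin X$, then $\langle X\rangle \subseteq \langle B \setminus \{u\}\rangle$ and the construction of Lemma \ref{thm:RCR no pbsw} goes through. When $|X| \geq 2$, the rank of $\langle X\rangle$ is $\geq 2$, so one can pick $v' \in \langle X\rangle$ that is not a power of $w$, and Lemma \ref{lem:encapsulation} provides $z = w^m v' w^{-m}$ whose first letter equals that of $w$ (hence $\neq p$) and whose last letter is its inverse (hence $\neq p^{-1}$). When $|X|=1$, say $X=\{x_0\}$, the existence of witnesses with distinct first letters in $\langle x_0\rangle$ forces $x_0$ to be cyclically reduced, so $x_0$ and $x_0^{-1}$ have different first letters and at least one is $\neq p$; then $z = x_0^{\pm m} v_L x_0^{\mp m}$ for any $v_L \in B \setminus \{u, x_0\}$ (nonempty since $r \geq 3$) works, mirroring Lemma \ref{thm:RCR no pbsw} verbatim.

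The main obstacle is when every partial-reduction witness is forced to contain $u$; equivalently, when every element of $\langle B \setminus \{u\}\rangle$ begins with a single letter $\ell$. If $\ell \neq p$, then each $b \in B \setminus \{u\}$ begins with $\ell \neq p$ and Lemma \ref{thm:RCR no pbsw}'s proof already applies; the genuinely new case is $\ell = p$, where $\langle B \setminus \{u\}\rangle$ supplies no element beginning with a letter other than $p$. Here my plan is to apply Lemma \ref{lem:encapsulation} inside $\langle X\rangle$ itself (which has rank $\geq 2$, since $X = \{u\}$ is excluded: all elements of $\langle u\rangle$ begin with $p$ because $u$ is not cyclically reduced) to produce $w' \in \langle X\rangle$ whose first letter is $\neq p$ and whose last letter is $\neq p^{-1}$. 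Next pick $b \in B \setminus X$ (nonempty since $X$ is proper, and $b \neq u$ because $u \in X$) and pass to the Nielsen-equivalent basis $B' := (B \setminus \{b\}) \cup \{w'b\}$; this is a legitimate automorphism because $w' \in \langle X\rangle \subseteq \langle B \setminus \{b\}\rangle$. The junction $w' \cdot b$ is reduced since $b$ starts with $p$ while $w'$ ends with a letter $\neq p^{-1}$, so $w'b \in B'$ begins with a letter $\neq p$. Finally, the construction of Lemma \ref{thm:RCR no pbsw} applied inside $B'$, with $q := w'b$ and any $v_L \in B' \setminus \{u, w'b\} = B \setminus \{u, b\}$ (nonempty because $r \geq 3$), produces the required $z$. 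Thus the hypothesis $r \geq 3$ is used twice: to guarantee rank $\geq 2$ for the initial encapsulation inside $\langle X\rangle$, and to provide $v_L$ for the final one inside $B'$.
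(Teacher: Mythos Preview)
Your argument is correct, but it takes a more circuitous route than the paper's. The paper first observes that one may enlarge $X$ so that $B\setminus X=\{\beta\}$ is a singleton, reducing the entire proof to two cases: either $\beta=u$ (so $u\notin X$), in which case conjugating every element of $X$ by a suitable power $w^m$ yields an \RCR\ basis $w^mXw^{-m}\cup\{u\}$; or $\beta\neq u$ (so $u\in X$), in which case replacing $\beta$ by $w^{|\beta|+1}\beta$ already gives an \RCR\ basis containing $u$. In both cases Lemma~\ref{thm:RCR no pbsw} finishes the job.

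Your case analysis (can $X$ be chosen avoiding $u$? If not, is $\ell=p$ or $\ell\neq p$? If $X$ avoids $u$, is $|X|=1$ or $|X|\geq 2$?) covers the same ground, and your construction in the hard case $\ell=p$ (replacing $b$ by $w'b$ with $w'$ built via Lemma~\ref{lem:encapsulation}) is a valid alternative to the paper's $w^{|\beta|+1}\beta$; your direct construction of $z$ when $u\notin X$ simply inlines the proof of Lemma~\ref{thm:RCR no pbsw} rather than invoking it. Two small remarks: when you say ``pick $v'\in\langle X\rangle$ that is not a power of $w$'', what Lemma~\ref{lem:encapsulation} actually requires is that $w$ and $v'$ not be powers of a common element (equivalently, that they not commute), which is slightly stronger but still available since $\langle X\rangle$ has rank $\geq 2$; and your use of $q$ for both the letter and the basis element $w'b$ at the end is a notational slip.
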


\begin{proof}
  If $B$ is \RCR, then we are done.
  Thus we will proceed under the assumption that $B$ is not \RCR, and let  $u \in B$ start with a letter $p$ and end with $p^{-1}$.
  We are going to construct a new basis $B' \ni u$ that is \RCR.

  Since $B$ is not \RCR, each basis element of $B$ begins with $p$ and ends with $p^{-1}$.
  We will assume, without loss of generality, that $B \setminus X$ contains just one element, call it $\beta$.
  Let $w \in \langle X\rangle$ be a word beginning with a letter $q \ne p$.
  We will consider two cases:
\medskip

  \noindent
  \textbf{Case 1.} $\beta = u$.\\
  \noindent
Then for each $\gamma \in X$, let $m_\gamma$ be the value given by Lemma \ref{lem:encapsulation} such that $w^{m_\gamma}\gamma w^{-m_\gamma}$ begins with $q$ and ends with $q^{-1}$.
  Let $m$ be the maximum of such numbers $m_\gamma$.
	In this case, we construct the basis $B' = w^mXw^{-m}\cup\{u\}$.
    We can verify this is a basis by conjugating $B$ by $w^m$ to get $w^mBw^{-m}$ and then applying the automorphism which maps $u$ to $w^{-m}u w^{m}$ and maps $X$ to itself.
Then $B'$ is a \RCR\ basis containing $u$.
\medskip

\noindent \textbf{Case 2.} $\beta \neq u$.\\
\noindent  In this case we construct the basis $B' = X\cup\{w^{|\beta|+1}\beta\}$.
Then $B'$ is a \RCR\ basis containing $u$.
\end{proof}

Lemma \ref{lem:big boy} can also be generalized in a different direction.

\begin{dfn}\label{dfn:slender}
  We call a word $w$ \defnsub{slender} if there exists a generator $x_1$ such that the number of letters in $w$ equal to $x_1$ or $x_1^{-1}$ is 1 or 0.
\end{dfn}

\begin{thm}
  \label{thm:CCLR max 1}
  Let $\varphi$ be a \CCLR\ automorphism of $F_r$, where $r \geq 3$.
  Let $w$ be a slender word.
  Then $\varphi(w)$ does not contain a primitivity-blocking subword.
\end{thm}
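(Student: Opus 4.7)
My plan is to split on the number of $x_1^{\pm 1}$-letters in $w$; let $H=\langle x_2,\ldots,x_r\rangle$.

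If $w$ has exactly one $x_1^{\pm 1}$-letter, then $w=w_1x_1^{\pm 1}w_2$ with $w_1,w_2\in H$, and the map $\alpha\colon x_1\mapsto w,\ x_i\mapsto x_i$ (for $i\ge 2$) is an automorphism of $F_r$, so $\varphi(w)$ is primitive and lies in the basis $B'=(\varphi\circ\alpha)(\{x_1,\ldots,x_r\})=\{\varphi(w),\varphi(x_2),\ldots,\varphi(x_r)\}$. I will show $B'$ is \Rone\ by taking the proper subset $X=\{\varphi(x_2),\ldots,\varphi(x_r)\}$: if every nontrivial element of $\langle X\rangle=\varphi(H)$ began with the same letter $a$, then both $\varphi(x_i)$ and $\varphi(x_i)^{-1}$ would start with $a$ for each $i\ge 2$, forcing $\varphi(x_i)$ to begin with $a$ and end with $a^{-1}$. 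Conjugating the \CCLR\ basis $B=\{\varphi(x_1),\ldots,\varphi(x_r)\}$ by $a$ would then decrease each such $\varphi(x_i)$ in length by at least $2$ while $\varphi(x_1)$'s length grows by at most $2$, for a net change of at most $-2(r-1)+2\le -2$ when $r\ge 3$---contradicting strong reduction. Hence $B'$ is \Rone, and Theorem \ref{Rone no pbsw} yields the conclusion.

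If $w$ has no $x_1^{\pm 1}$-letter, so $w\in H$, then the slender words $x_1w$ and $wx_1$ each fall into the previous case, giving that the reduced words $\varphi(x_1)\varphi(w)$ and $\varphi(w)\varphi(x_1)$ contain no primitivity-blocking subword. Writing $k$ and $k'$ for the respective cancellation lengths at the two joins, a direct calculation shows that the first $k$ letters of $\varphi(w)$ coincide with the first $k$ letters of $\varphi(x_1)^{-1}$, and the last $k'$ letters of $\varphi(w)$ coincide with the last $k'$ letters of $\varphi(x_1)^{-1}$. Hence any subword of $\varphi(w)$ contained in positions $[k+1,|\varphi(w)|]$ is a subword of $\varphi(x_1w)$, any subword in $[1,|\varphi(w)|-k']$ is a subword of $\varphi(wx_1)$, and any subword lying entirely in either cancellation zone is a subword of $\varphi(x_1)^{-1}$, which contains no primitivity-blocking subword by Lemma \ref{lem:big boy}. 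This covers every subword of $\varphi(w)$ \emph{except} possibly the ``spanning'' ones $s=\varphi(w)[i,j]$ with $i\le k$ and $j\ge|\varphi(w)|-k'+1$ that reach into both cancellation zones through the middle.

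For these spanning subwords, my plan is to use an auxiliary primitive of the form $\tilde w=x_j^Nx_1x_j^{-N}w$ for a suitable $j\ge 2$ and large $N$; this $\tilde w$ is primitive because $gx_1g^{-1}w$ with $g\in H$ is the image of $x_1$ under an explicit automorphism. Lemma \ref{lem:encapsulation} applied to $(\varphi(x_j),\varphi(x_1))$ stabilizes the reduced form of $\varphi(x_j)^N\varphi(x_1)\varphi(x_j)^{-N}$ so that its first letter equals the first letter of $\varphi(x_j)$ and its last letter equals the inverse of that letter. A strongly-reduced-plus-$r\ge 3$ argument analogous to the one in the first case lets me choose $j$ and the sign of $N$ so that (i) no cancellation occurs at the join between $\varphi(x_j)^N\varphi(x_1)\varphi(x_j)^{-N}$ and $\varphi(w)$, and (ii) the resulting word $\varphi(\tilde w)$ is cyclically reduced. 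Then $\varphi(w)$ appears as a literal subword of the cyclically reduced primitive $\varphi(\tilde w)$, so the spanning subwords are not primitivity-blocking. The main technical obstacle is verifying the existence of a suitable $j$ in every degenerate configuration of the first and last letters of $\varphi(x_1)$ and $\varphi(w)$, which should be ruled out by a careful but finite case analysis built on strong reduction of $B$.
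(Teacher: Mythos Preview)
Your Case~1 is correct and takes a genuinely different route from the paper. Instead of directly manufacturing a cyclically reduced primitive containing $\varphi(w)$ (the paper multiplies $\varphi(w)$ by a suitable $\varphi(y)$, or by an encapsulated block $\varphi(z)^m\varphi(s)\varphi(z)^{-m}$), you place $\varphi(w)$ into the basis $B'=\{\varphi(w),\varphi(x_2),\ldots,\varphi(x_r)\}$ and verify that $B'$ is \Rone\ via the proper subset $X=\{\varphi(x_2),\ldots,\varphi(x_r)\}$, then invoke Theorem~\ref{Rone no pbsw}. Your strong-reduction count ($-2(r-1)+2\le -2$) is exactly the right obstruction. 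This is a clean alternative.

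Your Case~2, however, has a real gap, and the machinery you set up is heavier than needed. The decomposition into non-spanning and ``spanning'' subwords is unnecessary, and your plan for the spanning ones demands a $j\ge 2$ and a sign $\epsilon$ for which the first letter of $\varphi(x_j)^{\epsilon}$ avoids \emph{both} the first letter $a$ of $\varphi(w)$ and the letter $b=(\text{last letter of }\varphi(w))^{-1}$. You concede this is an unresolved ``technical obstacle'', and indeed it is not obvious that such $j,\epsilon$ exist in every configuration compatible with strong reduction. The fix is to drop your condition~(ii) altogether. Once you secure only condition~(i)---no cancellation between the encapsulated block $\varphi(x_j)^{\epsilon N}\varphi(x_1)\varphi(x_j)^{-\epsilon N}$ and $\varphi(w)$---the word $\tilde w=x_j^{\epsilon N}x_1x_j^{-\epsilon N}w$ contains $x_1$ exactly once, so your own Case~1 applies to $\tilde w$: $\varphi(\tilde w)$ has no primitivity-blocking subword, and since $\varphi(w)$ sits inside $\varphi(\tilde w)$ as a literal factor, neither does $\varphi(w)$. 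Condition~(i) alone asks only that some $\varphi(x_j)^{\epsilon}$ with $j\ge 2$ not begin with $a$, and the same strong-reduction count you used in Case~1 rules out the alternative. This streamlined reduction to Case~1 is precisely the paper's approach in the zero-occurrence case.
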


\begin{proof}
Assume first that some $x=x_i$ appears in $w$ exactly once.
  If $\varphi(w)$ is cyclically reduced, then $\varphi(w)$ is a cyclically reduced primitive word, and thus it cannot contain a primitivity-blocking subword.

Suppose now that $\varphi(w)$ is not cyclically reduced.
  Let $p$ be the first letter of $\varphi(w)$ (and thereby $p^{-1}$ is the last letter of $\varphi(w)$).
  Let $B$ be the basis associated with $\varphi$, i.e.\ $\left\{\varphi(x_j):j=1,2,\dots, r\right\}$.
  If there is some element $\varphi(y)\in B$ not equal to $\varphi(x)$ which neither begins with $p$ nor ends with $p^{-1}$, then $\varphi(w)\varphi(y)$ is a cyclically reduced primitive word.
  Otherwise, since $r \geq 3$ and $\varphi$ is \CCLR, there exists $\varphi(z) \in \left(B\setminus \varphi(x)\right)^{\pm 1}$ that does not begin with $p$.
  Then let $\varphi(s)$ be a basis element not equal to $\varphi(z)$ or $\varphi(x)$, and let $u = \varphi(w)\varphi(z)^m\varphi(s)\varphi(z)^{-m}$, where $m$ is the value given by Lemma \ref{lem:encapsulation}.
  Then $u$ is a cyclically reduced primitive word containing $\varphi(w)$, so $\varphi(w)$ is not primitivity-blocking.

Now suppose that for some $x=x_i$, there is no letter $x$ or $x^{-1}$ in $w$.
Select $y$ from the set of generators and their inverses such that $y\neq x^{\pm 1}$ and there is no cancellation in $\varphi(y)\varphi(w)$.
Since $\varphi$ is \CCLR\ and $r\geq 3$, such a $y$ exists.
Let $m$ be the value given by Lemma \ref{lem:encapsulation} such that $\varphi(y)^{-m}x\varphi(y)^{m}$ has the same last letter as $\varphi(y)$.
Then let $v=y^{-m}xy^mw$.
$v$ contains $x$ exactly once and does not contain its inverse.
Thus by the argument above, $\varphi(v)$ is not primitivity-blocking, so there is a cyclically reduced primitive word containing $\varphi(v)$, and therefore also $\varphi(w)$.
Therefore, $\varphi(w)$ is not primitivity-blocking.
\end{proof}

Just as we will use Lemma \ref{lem:big boy} to produce orbit-blocking words in Theorem \ref{blocking}, we will use Theorem \ref{thm:CCLR max 1} to establish Theorem \ref{thm:blocking2}.

%

\section{On the structure of primitivity-blocking words}
To show that blocking primitivity does not have to be due to connectivity properties of the Whitehead graph, we include the following examples.

\begin{prp}\label{exam} \noindent {\bf (a)}
In the group $F_2$, the words $x_1^k x_2^k$ are primitivity-blocking for any $k \ge 2$.

\noindent {\bf (b)} In the group $F_r$, $r \ge 3$, the words $x_1^k \ldots  x_r^k$ are not primitivity-blocking for any $k \ge 1$.

\end{prp}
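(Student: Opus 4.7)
The plan for part (a) is to invoke the classical description of cyclically reduced primitive elements of $F_2$ (cf.\ \cite{Cohen}, as used by the authors in \cite{rank2}): up to cyclic permutation and inversion, every nontrivial cyclically reduced primitive element of $F_2$ has the form
\[
x_1^{n_1} x_2^{\epsilon} x_1^{n_2} x_2^{\epsilon} \cdots x_1^{n_s} x_2^{\epsilon}
\]
with a fixed $\epsilon \in \{+1,-1\}$, $s \ge 1$, and each $n_i \ge 1$, or the analogous form with $x_1$ and $x_2$ interchanged. In the first form every $x_2$-syllable has length $1$, so neither $x_2^2$ nor $x_2^{-2}$ occurs as a subword of the cyclic word; symmetrically, in the swapped form no $x_1^{\pm 2}$ occurs. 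Since the subword $x_1^k x_2^k$ with $k \ge 2$ would require both an $x_1^k$-block and an adjacent $x_2^k$-block, it cannot occur in either form, so $x_1^k x_2^k$ is primitivity-blocking in $F_2$.

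For part (b), I would exhibit the candidate
\[
w \;=\; x_1^k x_2^k x_3^k \cdots x_r^k \cdot x_2^{-(k-1)} \,\in\, F_r.
\]
It is cyclically reduced (the last letter $x_2^{-1}$ is not the inverse of the first letter $x_1$), has abelianization $(k,1,k,\ldots,k)$ of $\gcd = 1$, and visibly contains $x_1^k x_2^k \cdots x_r^k$ as a prefix. To show $w$ is primitive, apply in sequence the Nielsen automorphisms $\phi_i \colon x_i \mapsto x_2^{-1} x_i x_2$ for $i = 3, \ldots, r$. Using the identity $(x_2^{-1} x_i x_2)^k = x_2^{-1} x_i^k x_2$, one checks that the round $\phi_r, \phi_{r-1}, \ldots, \phi_3$ shifts one $x_2^{-1}$ from the suffix leftward through the blocks $x_r^k, \ldots, x_3^k$ and cancels it against one $x_2$ in the initial $x_2^k$ block; repeating this round $k-1$ times brings $w$ to the positive word $x_1^k x_2 x_3^k \cdots x_r^k$, which is primitive since one of its exponents equals $1$. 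Hence $w$ is primitive, so $x_1^k x_2^k \cdots x_r^k$ is not primitivity-blocking in $F_r$.

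The main obstacle is the bookkeeping of the iterated conjugations in part (b); the structural content of both parts is essentially supplied by the description of primitives of $F_2$ used in part (a), with the extra generators in $r \ge 3$ providing the room that lets the $x_2^{-(k-1)}$ suffix be absorbed without destroying the $x_1^k x_2^k \cdots x_r^k$ prefix.
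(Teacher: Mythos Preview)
Your argument for (a) is correct and matches the paper's: both invoke the Cohen--Metzler--Zimmermann description to conclude that in any cyclically reduced primitive of $F_2$ one of the generators occurs only with exponent $+1$ throughout or $-1$ throughout, so $x_1^k x_2^k$ with $k\ge 2$ cannot be a subword.

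For (b) your proof is also correct but uses a different witness from the paper. The paper starts from the obviously primitive element $u = x_r^k x_2$ and applies the automorphisms $x_r \mapsto x_i^k x_r x_i^{-k}$ for $i = 1,\ldots, r-1$ in order, producing the cyclically reduced primitive
\[
x_1^k x_2^k \cdots x_r^k\, x_{r-1}^{-k} \cdots x_1^{-k}\, x_2,
\]
which contains $x_1^k\cdots x_r^k$ as a prefix. Your approach runs in the opposite direction: you write down the shorter candidate $x_1^k x_2^k \cdots x_r^k\, x_2^{-(k-1)}$ and reduce it, via $k-1$ rounds of the conjugations $\phi_r,\ldots,\phi_3$, to the visibly primitive $x_1^k x_2\, x_3^k \cdots x_r^k$. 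The paper's route is cleaner to verify---a single pass of $r-1$ automorphisms with no iteration and essentially no cancellation to track---while yours yields a noticeably shorter witness (length $rk + (k-1)$ versus $(2r-1)k + 1$) at the cost of the bookkeeping you flag, namely following the migrating $x_2^{-1}$ through each block in each round. Both establish the claim.
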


\begin{proof}
\renewcommand{\theenumi}{\alph{enumi}}
{\bf (a)} Theorem \ref{basis} says that in a cyclically reduced primitive word in $F_2$, at least one of the generators either only appears with the exponent $1$ throughout or the exponent $-1$ throughout.
Thus, $x_1^k x_2^k$ cannot be a subword of a cyclically reduced primitive element if $k \ge 2$.

\noindent {\bf (b)} Let $r \ge 3$.
The word $u=x_r^k x_2$ is obviously primitive in $F_r$ for any $k \ge 1$.
Applying to $u$ the automorphisms $x_r\mapsto x_i^k x_r x_i^{-k}$, $~x_j \mapsto x_j$ if $j \ne r$, for $i=1,\dots, r-1$ in order gives the element
$$x_1^k\dots x_r^k x_{r-1}^{-k} \dots x_1^{-k} x_2,$$
which is a cyclically reduced primitive word containing $x_1^k\dots x_r^k$ as a subword.
\end{proof}

Part (b) of Proposition \ref{exam} can be generalized as follows:

\begin{prp}\label{XY}
Let $F_r$ where $r \ge 3$ be generated by the set $X\sqcup Y$.
Let $\langle X \rangle$ and $\langle Y \rangle$ denote the subgroups of $F_r$ generated by $X$ and $Y$, respectively.
Let $w=w_Xw_Y$, where $w_X \in \langle X \rangle$, $w_Y \in \langle Y \rangle$.
Then $w$ is not a primitivity-blocking word.

\end{prp}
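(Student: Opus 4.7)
My plan is to exhibit, for any $w = w_X w_Y$ with $w_X, w_Y$ both nontrivial (the interesting case), an explicit cyclically reduced primitive element of $F_r$ that contains $w_X w_Y$ as a subword. The main technical device will be the ``partial conjugation'' automorphism $\varphi \in \Aut(F_r)$ defined by $\varphi(x) = x$ for $x \in X$ and $\varphi(y) = w_X y w_X^{-1}$ for $y \in Y$; one checks this is an automorphism (with inverse sending $y \mapsto w_X^{-1} y w_X$, noting that $\varphi$ fixes $w_X$ pointwise), and for every $z \in \langle Y \rangle$ the telescoping identity $\varphi(z) = w_X z w_X^{-1}$ holds. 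I will also exploit that, because $X$ and $Y$ are disjoint, there is no free cancellation between letters of $\langle X \rangle$ and letters of $\langle Y \rangle$ when they are concatenated.

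Since $r \geq 3$, at least one of $|X|, |Y|$ is at least $2$. By replacing $w$ with $w^{-1} = w_Y^{-1} w_X^{-1}$ if necessary (note $w$ is primitivity-blocking iff $w^{-1}$ is, and this move swaps the roles of $X$ and $Y$), I may assume $|X| \geq 2$. I then choose $x_j \in X$ distinct from the generator $x_i$ underlying the first letter of $w_X$. The element $u_0 := w_Y \cdot x_j$ is primitive, because $x_j \mapsto w_Y x_j$ (with all other generators fixed) is a Nielsen automorphism --- this uses only that $w_Y \in \langle Y \rangle$ does not involve $x_j$. Applying $\varphi$ yields
\[
  \varphi(u_0) \;=\; w_X\, w_Y\, w_X^{-1}\, x_j,
\]
which is primitive since $u_0$ is, and which visibly contains $w_X w_Y$ as a prefix.

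The key remaining task is to verify that $\varphi(u_0)$ is cyclically reduced. At the two internal boundaries $w_X \mid w_Y$ and $w_Y \mid w_X^{-1}$ no cancellation occurs, because $X$ and $Y$ share no generators. At the boundary $w_X^{-1} \mid x_j$, the last letter of $w_X^{-1}$ is $x_i^{\mp 1}$, which differs from $x_j^{-1}$ precisely because $i \neq j$; the same choice $j \neq i$ rules out cyclic cancellation between the final letter $x_j$ and the initial letter $x_i^{\pm 1}$. The degenerate cases $w_Y = 1$ and $w_X = 1$ are handled separately by taking $w_X y_1$ or $x_1 w_Y$ for any $y_1 \in Y$ or $x_1 \in X$, which are primitive via a single Nielsen transformation and are manifestly cyclically reduced. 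I expect the main obstacle to be precisely this cancellation bookkeeping together with the reduction to $|X| \geq 2$; once those are in place, the partial-conjugation automorphism does the work, and the connectivity of the Whitehead graph of $w_X w_Y$ (which could be quite rich in general) need never be examined directly.
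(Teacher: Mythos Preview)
Your proof is correct and follows essentially the same approach as the paper's: both construct a cyclically reduced primitive witness of the form $w_?^{\pm 1} w_{?'} w_?^{\mp 1} \cdot (\text{extra letter})$ using a partial-conjugation automorphism, with the extra letter chosen to avoid boundary and cyclic cancellation. The only cosmetic differences are that the paper assumes $|Y|\ge 2$ directly (taking the witness $w_Y^{-1}w_Xw_Y\,y$ with $y\in Y$) whereas you reduce to $|X|\ge 2$ by passing to $w^{-1}$, and you run the automorphism in the forward direction (start from the obviously primitive $w_Y x_j$) while the paper runs it backward (start from the witness and reduce to $w_X y$); you also handle the degenerate cases $w_X=1$, $w_Y=1$ explicitly, which the paper leaves implicit.
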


\begin{proof}
Since $r \ge 3$, we can also assume, without loss of generality, that $|Y| \ge 2$.

Now select $y \in Y$ such that there is no cancellation in $w_Y y$ or in $y w_Y^{-1}$.
Consider the word $v= w_Y^{-1} w_X w_Y y$.
This word is cyclically reduced and contains $w$ as a subword.
To show that $v$ is primitive we apply the automorphism that conjugates all generators in $X$ by $w_Y$ and acts as the identity on $Y$.
Since $Y$ and $X$ are disjoint, this is indeed an automorphism.

Applying this automorphism to $v$ gives $v'=w_Xy$.
Since $v'$ contains the letter $y$ exactly once, it is primitive, and therefore so is $v$.
Thus, $w$ is not primitivity-blocking.
\end{proof}

We note that the shortest  primitivity-blocking word in $F_2$ is $x_1^{-1}x_2x_1$.
Note that the Whitehead graph (with or without the external edge) of this word does have a cut vertex.

It is natural then to look for the shortest  primitivity-blocking words in $F_r$ for $r>2$.

\begin{thm}\label{thm:shortpb}
  The word $w = x_1x_2x_3\dots x_{r-1}x_r^2x_{r-1}\dots x_3x_2x_1^{-1}\in F_r$ is the shortest primitivity-blocking word for $r>2$.
\end{thm}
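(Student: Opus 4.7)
The theorem has two parts: that $w$ is primitivity-blocking (call this (a)), and that no word of length less than $|w|=2r$ is (call this (b)). A direct letter count gives $|w| = (r-1) + 1 + 1 + (r-2) + 1 = 2r$.

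For (a), my plan is to analyze the Whitehead graph of an arbitrary cyclically reduced word $u$ containing $w$ as a subword and conclude that $\Wh(u)$ is 2-connected, contradicting Whitehead's cut vertex lemma. The central structural computation is that reading off the adjacent-pair edges of $w$ in order traces a simple cycle
\[
x_1 \sim x_2^{-1} \sim x_3 \sim x_4^{-1} \sim \cdots \sim x_r^{\pm 1} \sim x_r^{\mp 1} \sim \cdots \sim x_3^{-1} \sim x_2 \sim x_1
\]
of length $2r-1$ through every vertex of $\Wh(w)$ other than $x_1^{-1}$, which remains isolated in $\Wh(w)$. Because $w$ starts with $x_1$ and ends with $x_1^{-1}$, $w$ is not cyclically reduced, so any such $u$ strictly extends $w$. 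I would then case-split on where $w$ sits inside $u$: the letters immediately bordering the $w$-occurrence, together with the external edge of $u$, contribute edges incident to $x_1^{-1}$. In the generic case these provide two distinct neighbors of $x_1^{-1}$, and attaching a vertex of degree at least two to a cycle yields a 2-connected graph. The subtle subcase is when the bordering letters collapse the two would-be neighbors of $x_1^{-1}$ into a single vertex via parallel edges; here I would apply the length-reducing Whitehead automorphism conjugating by that common vertex, obtaining a strictly shorter cyclically reduced word still containing $w$, and induct on length.

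For (b), let $w' \in F_r$ with $|w'| \leq 2r-1$. If some generator $x_k$ appears in $w'$ neither as $x_k$ nor as $x_k^{-1}$, I would set $u := w' x_k$: this is cyclically reduced (no cancellation at the junction nor at the cyclic boundary, since $w'$ has no $x_k^{\pm 1}$), and primitive because $x_k$ occurs in $u$ exactly once, so the Nielsen map sending $x_k \mapsto u$ while fixing the other generators is an automorphism of $F_r$. If instead every generator appears in $w'$, then $|w'| \geq r$ and $\Wh(w')$ has at most $2r-2$ edges on $2r$ vertices, which cannot by itself force 2-connectedness; I would then explicitly exhibit a short extension $u$ of $w'$ whose Whitehead graph retains a cut vertex or is disconnected, and confirm primitivity of $u$ by running the Whitehead algorithm for one or two steps.

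The main obstacle lies in the Whitehead-reduction subcase of (a): one must verify that conjugating by the common neighbor carries $w$ into a word that still contains $w$ as a subword (possibly up to cyclic rotation), and that the induction on length terminates at a configuration in which $x_1^{-1}$ acquires two distinct neighbors. The ``every generator used'' subcase of (b) is the delicate part there, though it should yield to case analysis on the first and last letters of $w'$.
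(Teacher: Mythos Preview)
Your plan for part (a) is essentially the paper's argument. The paper also computes that $\Wh(w)$ is a $(2r-1)$-cycle missing only $x_1^{-1}$, takes a minimal-length cyclically reduced primitive $v$ containing $w$, observes that $x_1^{-1}$ must acquire at least one neighbor $y$ in $\Wh(v)$, and in the single-neighbor case applies the Whitehead move $\varphi:x_1\mapsto y^{-1}x_1$ to shorten $v$ while keeping $w$ inside. Your ``main obstacle'' is resolved there in one line: since only $x_1^{\pm1}$ is touched, $\varphi(w)=y^{-1}wy$, and the only cancellation in $\varphi(v)$ occurs at those two flanking letters, so $w$ survives as a subword. (One terminological nit: the relevant move is $x_1\mapsto y^{-1}x_1$, not conjugation of the whole basis by $y$.)

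Your part (b), however, has a real gap in the ``every generator appears'' case. You propose to \emph{construct} an extension with a cut vertex and then ``confirm primitivity by running the Whitehead algorithm for one or two steps.'' That is not a proof: a cut vertex is necessary but not sufficient for primitivity, and you offer no uniform construction or termination bound covering all $w'$. The fix is immediate and is what the paper does: if $|w'|\le 2r-1$ and every generator occurs, pigeonhole forces some $x_k$ to occur (with either sign) exactly once, so $w'$ is already primitive by the very Nielsen-map argument you used in your first case. If $w'$ is cyclically reduced you are done; if not, append a single letter $y\notin\{x_k^{\pm1}\}$ chosen to avoid cancellation at the ends, and $w'y$ is a cyclically reduced primitive word containing $w'$. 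No Whitehead-graph case analysis is needed here.
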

\begin{proof}
\begin{figure}[b]
\caption{The Whitehead graph of $x_1x_2x_3\dots x_{r-1}x_r^2x_{r-1}\dots x_3x_2x_1^{-1}$}
\begin{tikzpicture}
  \begin{scope}[every node/.style={fill=black,circle,inner sep=1.5}]
    \clip(-0.5,-1)rectangle(2.6,2);
    \foreach \n in {1,...,4}{
      \fill(\n-1,0)node[label={[yshift=-2.5em] $x_\n^{-1}$}](X\n){};
      \fill(\n-1,1)node[label=$x_\n$](x\n){};
    }
    \fill(-1,1)node(xk){};
    \draw(x4)--(X3)--(x2)--(x1)--(X2)--(x3)--(X4);
  \end{scope}
  \draw(3,0.5)node{$\cdots$};
  \begin{scope}[every node/.style={fill=black,circle,inner sep=1.5}]
    \clip(3.4,-1)rectangle(5.5,2);
    \foreach \n in {1,...,3}{
      \fill(\n,0)node[label={[yshift=-2.5em] $x_\n^{-1}$}](X\n){};
      \fill(\n,1)node[label=$x_\n$](x\n){};
    }
    \fill(5,0)node[label={[yshift=-2.5em] $x_r^{-1}$}](Xr){};
    \fill(5,1)node[label=$x_r$](xr){};
    \fill(4,0)node[label={[yshift=-2.75em] $x_{r-1}^{-1}$}](Xr1){};
    \fill(4,1)node[label={[yshift=-0.5em] $x_{r-1}$}](xr1){};
    \fill(3,0)node(Xr2){};
    \fill(3,1)node(xr2){};
    \draw(xr2)--(Xr1)--(xr)--(Xr)--(xr1)--(Xr2);
  \end{scope}
\end{tikzpicture}
\label{fig:Whw}
\end{figure}
  First we will show that there can be no shorter primitivity-blocking words.
  For a word $u$ of length $<2r$ there is an $x_k$ which appears (positively or negatively) one or fewer times in $u$.
  If $x_k$ appears zero times then the word $ux_k$ is primitive, cyclically reduced and contains $u$.
  If $x_k$ appears only once, then $u$ is primitive, and if $u$ is cyclically reduced, then $u$ cannot be primitivity-blocking.
  If $u$ is not cyclically reduced, then let $y$ be a letter such that $y \neq x^{\pm 1}$ and $y$ is not the letter the begins or ends $u$, then $uy$ is cyclically reduced and primitive, and so $u$ is not primitivity-blocking.

  Now we will show that $w$ is primitivity-blocking.
  Assume by way of contradiction that $v$ is a cyclically reduced primitive word which contains $w$ as a subword.
  Assume further that no shorter cyclically reduced primitive word contains $w$ (i.e.\ $v$ is minimal).
  Since $w$ is not cyclically reduced, it must be a proper subword of $v$.
  We remark that the Whitehead graph of $w$ (Figure \ref{fig:Whw}) consists of a single cycle of all the vertices other than $x_1^{-1}$.
  The Whitehead graph of $v$ must contain $\Wh(w)$ as a subgraph.
  Since $w$ begins with $x_1$ and ends with $x_1^{-1}$, there must be an edge in $\Wh(v)$ from $x_1^{-1}$ to some other vertex.
  If there is more than one such edge, then $\Wh(v)$ is connected and has no cut vertices.
  Since we assumed $v$ was a cyclically reduced primitive word, this contradicts the cut vertex lemma.
  So, $\Wh(v)$ has exactly one such edge.
  We will call the other vertex in this edge $y$.
  Applying the automorphism $\varphi =x_1 \mapsto y^{-1}x_1$ then cancels a $y$ or $y^{-1}$ respectively for each $x_1$ or $x_1$ that appears, thus reducing the total length of $v$.
  Furthermore, $\varphi(w)=y^{-1}wy$, of which only the $y^{-1}$ and $y$ cancel in $\varphi(v)$.
  Thus $w$ is a subword of $\varphi(v)$, contradicting our assumption that $v$ was minimal.
\end{proof}

The details of the above proof are heavily inspired by the proof of Theorem 3.7 in \cite{Ascari}.

\begin{rmk}
The word $w = x_1x_2^2x_3^2\dots x_r^2x_1^{-1}$ is also a primitivity-blocking word of the same length as the one in Theorem \ref{thm:shortpb}.
Since it has a Whitehead graph consisting of a cycle of all vertices other than $x_1^{-1}$, begins with $x_1$, and ends with $x_1^{-1}$, the proof of Theorem \ref{thm:shortpb} applies without adjustment to this word as well.
\end{rmk}

\begin{rmk}
A similar word $w = x_1x_2 \dots x_{r-1}x_r^2x_{r-1}\dots x_2x_1$ is not primitivity-blocking in any $F_r$, $r\ge 2$.
  Indeed, for $r=2$, $w$ is a subword of $x_2x_1x_2^2x_1$, which is a cyclically reduced primitive word.

  For $r > 2$, let $u=x_2x_3\dots x_{r-1}x_r^2x_{r-1}\dots x_3x_2$.
  We will now consider the word $v=ux_1ux_1x_3$.
  This  $v$ clearly contains $w$ as a subword and is cyclically reduced.
  Apply the automorphism $\varphi = x_1\mapsto u^{-1}x_1, x_i \mapsto x_i, ~i \ge 2$.
  Then $\varphi(v) = x_1^2x_3$ is a primitive element, and therefore so is $v$.
\end{rmk}

\section{Detecting primitivity-blocking words}\label{algorithmF2}

In light of the results above, it is natural to ask:

\begin{prb}\label{prb:detectpb}
Is there an algorithm that, for a given word $u \in F_r$, would decide whether or not $u$ is primitivity-blocking?
\end{prb}

Below, we give a solution for the case $r=2$.
First we will need the following fact about the structure of bases in $F_2$ that builds on a result of \cite{Cohen}.
For details about this version of the statement, see Lemma 1 of \cite{rank2}.

\begin{thm}[\cite{Cohen}*{p.\ 1}] \label{basis}
    Up to switching $a$ with $b$ or $a$ with $a^{-1}$, or conjugating $u$ and $v$ by the same word, every basis $(u,v)$ of $F(a,b)$ is of the form
    \begin{align*}
        u&=ab^{m_1}\dots ab^{m_p}\\
        v&=\left( b^{r_1}ab^{n_1}\dots a b^{n_q}a b^{r_2} \right)^\varepsilon
    \end{align*}
    where $\varepsilon=\pm 1$ and $\left\{m_1,\dots,m_p\right\} =\left\{r_1 + r_2,n_1,\dots,n_q\right\}=\left\{t,t+1\right\}$ for some fixed integer $t$.
\end{thm}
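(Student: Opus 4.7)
The plan is to prove Theorem \ref{basis} by induction on the total length $|u|+|v|$, using Nielsen's reduction theorem for $F_2$: every basis of $F_2$ can be transformed into the standard basis $(a,b)$ by a sequence of elementary Nielsen transformations (swaps of the two basis elements, inversions, and replacements of one basis element by its product with the other). Before beginning the induction I would use the three symmetries listed in the statement (swapping $a \leftrightarrow b$, swapping $a \leftrightarrow a^{-1}$, and simultaneously conjugating $u$ and $v$) to normalize: assume both $u$ and $v$ are cyclically reduced, and assume every occurrence of $a$ in $u$ has exponent $+1$, so that $u$ has a syllabic decomposition of the form $ab^{m_1}\cdots ab^{m_p}$. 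The base case is $(a,b)$, which fits the stated form degenerately.

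For the inductive step, Nielsen reduction guarantees that some move $u \mapsto uv^{\pm 1}$, $v \mapsto vu^{\pm 1}$, $u \mapsto v^{\pm 1}u$, or $v \mapsto u^{\pm 1}v$ strictly decreases $|u|+|v|$ after free reduction. Apply such a reducing move to obtain a shorter basis $(u',v')$, put $(u',v')$ into Cohen form by induction, then invert the move to recover $(u,v)$ and verify that the stated form is preserved, possibly after re-applying the allowed symmetries to restore the normalization. In this way the entire proof reduces to one combinatorial stability claim: the class of pairs of the stated form is closed under the inverses of length-decreasing Nielsen moves.

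The main obstacle is exactly this stability claim, which is really a discrete balance property, equivalent, for cyclically reduced pairs, to the fact that primitive conjugacy classes in $F_2$ are represented by Christoffel words. Concretely, when we splice $u'$ and $v'$ to recover $u$, the new multiset of $b$-exponents in $u$ must again take only two consecutive values $\{t,t+1\}$; small examples make this plausible, but the general bookkeeping is the heart of the matter. The cleanest route I would try is to organize the induction along the Stern--Brocot tree of primitive pairs, so that each Nielsen reduction corresponds to stepping to the parent of a mediant, and the two-consecutive-values property is preserved because mediants of balanced fractions remain balanced. As a more elementary backup, I would invoke Whitehead's cut-vertex lemma directly: if the $b$-exponents of a cyclically reduced primitive $u$ took three or more distinct values, then $\Wh(u)$ would be forced to have no cut vertex, contradicting primitivity; a parallel analysis on $v$, using that $(u,v)$ is a basis, then yields the joint balance condition relating the $m_i$, the $n_j$, and $r_1+r_2$.
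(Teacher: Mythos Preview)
The paper does not give its own proof of Theorem~\ref{basis}: the result is quoted from Cohen--Metzler--Zimmermann \cite{Cohen}, with a pointer to \cite{rank2} for the precise form of the statement. So there is no in-paper proof to compare your proposal against; the theorem functions here purely as an imported tool.

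That said, your main plan (Nielsen reduction and induction on $|u|+|v|$, organized along the Stern--Brocot tree of primitive pairs) is essentially the approach of the original source, and it is the right framework. What you have written, however, is a plan rather than a proof: you explicitly identify the stability claim (that the two-consecutive-values property survives the inverse of a length-reducing Nielsen move) as ``the heart of the matter'' and then do not carry it out. If you want this to count as a proof you must actually verify that when $(u',v')$ is in Cohen form with exponent set $\{t,t+1\}$ and you form, say, $u=u'v'$, the resulting exponent multiset is again of the form $\{t',t'+1\}$, and that the boundary data $r_1,r_2$ behave correctly.

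Your backup route via Whitehead's cut-vertex lemma does not work. In $F_2$ the Whitehead graph has only four vertices, and for a positive word of the shape $ab^{m_1}\cdots ab^{m_p}$ with all $m_i\ge 1$ the edge set of $\Wh(u)$ is determined solely by which letter transitions occur ($ab$, $bb$, $ba$), not by how large the exponents $m_i$ are. In particular $\Wh(ab^2ab^3ab^5)$ and $\Wh(ab^2ab^3)$ are identical graphs, so the cut-vertex lemma cannot distinguish an exponent set of size three from one of size two. The balance condition $\{m_i\}=\{t,t+1\}$ genuinely requires the inductive/Stern--Brocot bookkeeping, not a graph-connectivity shortcut.
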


We will need two more tools that further build on this fact before we can describe our algorithm.

\begin{lem}\label{lem:throwaway}
    Let $w$ be a positive word in $F_2$ whose first syllable is $b^n$.
    Let $q$ be the largest exponent appearing on $b$ in $w$.
    If $n<q$, then $w$ is primitivity-blocking if and only if $b^{-n}w$ is.
    If $n=q$ then $w$ is primitivity-blocking if and only if $aw$ is.
    Analogous statements hold if $b^n$ is the last syllable.
\end{lem}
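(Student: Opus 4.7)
The plan is to prove each of the two equivalences by establishing both implications; in each case, one direction is immediate because $b^{-n}w$ is a suffix of $w$ and $w$ is a suffix of $aw$, so any containment of the larger word in a cyclically reduced primitive passes to the smaller. The remaining directions are where Theorem \ref{basis} enters: analyze the cyclically reduced primitive $V$ containing the shorter word and either find $V$ (up to cyclic rotation) to already contain the longer word, or explicitly build a new cyclically reduced primitive that does.

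For the $n<q$ case, the nontrivial direction is to show that if $b^{-n}w = ab^{m_2}\cdots ab^{m_p}$ is a subword of a cyclically reduced primitive $V$, then so is $w$. After applying the symmetries of Theorem \ref{basis}, $V$ is in Cohen form with all $b$-exponents in some set $\{t,t+1\}$. Since $n<q$, the value $q$ is achieved by some $m_i$ with $i\geq 2$, and the corresponding $b$-syllable of $b^{-n}w$ sits inside a $b$-syllable of $V$, forcing $q\leq t+1$ and hence $t\geq q-1\geq n$. Thus every $b$-syllable of $V$ has at least $n$ letters, so after cyclically rotating $V$ if necessary, the leading $a$ of the occurrence of $b^{-n}w$ is preceded by a $b$-syllable of length at least $n$, and prepending its last $n$ letters exhibits $w$ as a subword of (a rotation of) $V$.

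For the $n=q$ case, the nontrivial direction is to show that if $w$ is a subword of a cyclically reduced primitive $V$ in Cohen form, then so is $aw$. The leading $b^n$ of $w$ matches the last $n$ letters of some $b$-syllable $b^e$ of $V$ with $e\in\{t,t+1\}$ and $e\geq n$. The easy subcase is $e=n$: then $b^n$ is exactly a syllable of $V$ and, after cyclic rotation if needed, is preceded by an $a$, so $aw\subseteq V$. The hard subcase is $e>n$: here a short argument rules out $e=t$ (since internal syllables of $w$ would be $\geq t>n$ but also $\leq q=n$), so $e=t+1$ and $n\leq t$; the internal syllables of $w$ (being full syllables of $V$ bounded above by $q=n$) then all equal $n=t$. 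In this subcase I would construct $V' = aw\cdot b^{n-m_p}\cdot ab^{n+1}$, whose Cohen-form $b$-exponents lie in $\{n,n+1\}$; by Theorem \ref{basis} this $V'$ is cyclically reduced primitive, and $aw$ appears as a prefix of $V'$ (with the trailing $b^{m_p}$ matching the first $m_p$ letters of the padded $b^n$ syllable).

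The analogous statements for when $b^n$ is the last syllable of $w$ follow by a symmetric argument obtained by reversing $w$ and running exactly the same analysis. The main technical obstacle is the explicit construction in the $n=q$, $e>n$ subcase: care is needed in padding the possibly short trailing syllable $b^{m_p}$ out to $b^n$ before appending $ab^{n+1}$, so that the exponent set of $V'$ remains within two consecutive integers and Theorem \ref{basis} applies.
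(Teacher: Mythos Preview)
Your overall strategy matches the paper's: the easy implications are trivial subword containments, and the hard ones use Theorem~\ref{basis} to locate enough $b$'s (or an $a$) just before the occurrence of the shorter word inside a cyclically reduced primitive. Your $n<q$ argument via cyclic rotation is essentially the paper's argument that the syllable of $v$ preceding $w'$ in $vw'$ must be $b^k$ with $k\geq q-1\geq n$; the packaging differs but the content is the same.

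There is, however, a real gap in your $n=q$, $e>n$ subcase. You claim that $V'=aw\cdot b^{\,n-m_p}\cdot ab^{n+1}$ is primitive ``by Theorem~\ref{basis}'' because its $b$-exponents lie in $\{n,n+1\}$. Theorem~\ref{basis} is a \emph{necessary} condition on the shape of a primitive element, not a sufficient one: for instance $abab^2abab^2=(abab^2)^2$ has Cohen form with exponents in $\{1,2\}$ but is a proper power, hence not primitive. So this step, as written, does not establish primitivity of $V'$.

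Your construction is in fact salvageable, because in that subcase you have already forced every $a$-syllable of $w$ to have length~$1$ and every internal $b$-syllable to equal $n$, so $aw\cdot b^{\,n-m_p}=(ab^n)^k$ for some $k$ and $V'=(ab^n)^k ab^{n+1}$; applying the automorphism $a\mapsto ab^{-n}$ sends $V'$ to $a^{k+1}b$, which is primitive. That is the missing justification. The paper handles this case by a slightly different split (on whether some $b$-exponent in $w$ is below $q$, and then on whether all $a$-exponents are $1$), but ends up constructing the same kind of word, asserting primitivity directly rather than appealing to Theorem~\ref{basis}.
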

\begin{proof}
    We will argue assuming that $b^n$ is the first syllable.

    Suppose that $b^{-n}w$ is primitivity-blocking.
    Then, since $b^{-n}w$ is a subword of $w$, $w$ is primitivity-blocking.
    Similarly, if $w$ is primitivity-blocking, then so is $aw$.

    Now suppose that $n<q$ and $w'=b^{-n}w$ is not primitivity-blocking.
    Then there exists $v$ such that $vw'$ is cyclically reduced and primitive with no cancellation between $v$ and $w'$.
    This $w'$ starts with $a$ by our hypothesis.
    Theorem \ref{basis} then implies that the last syllable of $v$ is $b^k$ where $k \geq q-1$.
    Since $n\leq q-1$, $w$ is a subword of $vw'$ and is therefore not primitivity-blocking.

    Finally, suppose that $n=q$ and $w$ is not primitivity-blocking.
    Then there exists $v$ such that $vw$ is cyclically reduced and primitive with no cancellation between $v$ and $w$.
    If the last syllable of $w$ is $b^k$ for some $k<q$, then by the argument above we may consider $\hat{w}=wb^{-k}$ instead.
    Thus assume without loss of generality that $w$ ends with $a$ or $b^q$.
    Suppose somewhere in $w$ $b$ appears to a smaller exponent than $q$.
    Then Theorem \ref{basis} implies that $v$ ends with $a$, so $aw$ is a subword of $vw$ and is therefore not primitivity-blocking.
    Suppose every exponent on $b$ in $w$ is $q$.
    If every exponent on $a$ in $w$ is 1, then either $w$ ends in $b^q$ so $awb$ is primitive, or $w$ ends in $a$, so $awb^{q+1}$ is primitive.
    Otherwise, by Theorem \ref{basis}, every exponent on $b$ in $vw$ must be 1, so $v$ must end in $a$ and we have that $aw$ is not primitivity-blocking.
\end{proof}

\begin{lem}\label{lem:shrink}
    Suppose $u\in F_2$ is of the form $ab^{m_1}\dots ab^{m_p}$, where each $m_i$ is in $\{t,t+1\}$ where $t\geq 0$.
    Denote the automorphism $a\mapsto ab^{-t}$ by $\varphi$.
    Then $u$ is primitivity-blocking if and only if $\varphi(u)$ is.
\end{lem}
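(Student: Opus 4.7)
The plan is to use that $\varphi$ is an automorphism of $F_2$ (with inverse $\varphi^{-1}\colon a\mapsto ab^t$, $b\mapsto b$) and to exploit the explicit description of $\varphi(u)$. Expanding directly, $\varphi(u)=ab^{m_1-t}ab^{m_2-t}\cdots ab^{m_p-t}$, which is freely reduced, positive, and of exactly the same shape as $u$ but with every $b$-exponent in $\{0,1\}$. Because $\varphi$ and $\varphi^{-1}$ play symmetric roles in the statement, I would prove just one direction, namely: if $\varphi(u)$ is not primitivity-blocking, then neither is $u$.

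The key step will be to take a primitive witness for $\varphi(u)$ and push it back through $\varphi^{-1}$. Concretely, suppose $\varphi(u)$ appears as a subword of a cyclically reduced primitive $w'$. Using Theorem~\ref{basis} I would place $w'$ in its positive canonical form $w'=ab^{k_1}\cdots ab^{k_n}$ with $k_i\in\{s,s+1\}$ for some $s\geq 0$. Since $\varphi(u)$ starts with $a$ and has all $a$-syllables of length one, a short position-matching argument will force the occurrence to begin at the $j$-th $a$ of $w'$ with
\[
m_i-t=k_{j+i-1}\ \text{for}\ 1\leq i\leq p-1,\qquad m_p-t\leq k_{j+p-1}.
\]
Setting $w=\varphi^{-1}(w')=ab^{k_1+t}\cdots ab^{k_n+t}$, every exponent $k_i+t\geq 0$, so no cancellation occurs in the substitution; hence $w$ is positive and freely reduced, therefore cyclically reduced, and it is primitive as the image of $w'$ under the automorphism $\varphi^{-1}$. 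Adding $t$ to both sides of the matching conditions gives exactly the conditions for $u$ to be a subword of $w$ at the same relative starting index, so $u$ is not primitivity-blocking.

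The main obstacle I expect is handling the other canonical forms allowed by Theorem~\ref{basis}. The $a\leftrightarrow a^{-1}$ symmetry can be ruled out quickly, since $\varphi(u)$'s honest $a$-letters cannot match $a^{-1}$'s in $w'$. The $a\leftrightarrow b$ swap, in which $w'$ takes the form $ba^{k_1}\cdots ba^{k_n}$, is the genuinely delicate case: I would run a mirror position-matching argument, keeping careful track of how $\varphi$ (which only modifies $a$-syllables) interacts with this symmetry, and separating the subcase where some $m_i=t$ so that $\varphi(u)$ contains the pattern $aa$.
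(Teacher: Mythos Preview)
Your proposal has two gaps. First, the claimed symmetry between $\varphi$ and $\varphi^{-1}$ does not hold: the hypothesis fixes $u$'s $b$-exponents in $\{t,t+1\}$, so $\varphi(u)$ has $b$-exponents in $\{0,1\}$, and re-applying the lemma to $\varphi(u)$ with parameter $t'=0$ gives only the trivial identity map. Nor are the two directions symmetric at the level of your argument: applying $\varphi^{-1}\colon a\mapsto ab^t$ to a positive witness keeps it positive, whereas applying $\varphi\colon a\mapsto ab^{-t}$ to a positive witness for $u$ can produce negative $b$-exponents, and you have not checked that this causes no trouble for free reduction or for the subword inclusion. Both directions require separate arguments.

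Second, even in the direction you do treat, the $a\leftrightarrow b$ swap is not a side issue. Whenever some $m_i=t$, the word $\varphi(u)$ contains $aa$, so Theorem~\ref{basis} forces $b$ (not $a$) to be the generator with exponent $\pm 1$ in any witness $w'$. Then $w'$ has the shape $ba^{k_1}\cdots ba^{k_n}$, and applying $\varphi^{-1}$ does not simply shift exponents by $+t$; instead $\varphi^{-1}(w')=b(ab^t)^{k_1}b(ab^t)^{k_2}\cdots$, and showing that $u$ sits inside this is genuine bookkeeping that your ``mirror position-matching'' sketch does not supply.

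The paper avoids both difficulties with a simpler device: cyclically permute the witness so that $u$ (respectively $\varphi(u)$) is an initial segment, write the witness as $uv$ (respectively $\varphi(u)\,v$), and use Theorem~\ref{basis} only to conclude that the cofactor $v$ is a positive word (handling the corner case $\varphi(u)=a^p$ by simply choosing $v=b$). Then one applies $\varphi$ (respectively $\varphi^{-1}$) and checks directly that there is no cancellation between the two factors and that the result is cyclically reduced. This handles each direction in a few lines without ever putting the full witness into canonical form, and in particular never meets the $a\leftrightarrow b$ case analysis at all.
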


\begin{proof}
    Suppose $u$ is not primitivity-blocking.
    We will show that $\varphi(u)$ is not primitivity-blocking either.
    Suppose $uv$ is a cyclically reduced primitive word such that there is no cancellation between $u$ and $v$.
    Then $\varphi(uv)$ is primitive.
    Since $u$ is positive, $v$ must also be positive by Theorem \ref{basis}.
    Thus, $\varphi(v)$ cannot end in $a^{-1}$.
    So, since $\varphi(u)$ starts with $a$, $\varphi(uv)$ is cyclically reduced.
    Furthermore, since $m_p\geq t$, there is no cancellation between $\varphi(u)$ and $\varphi(v)$, so $\varphi(u)$ is a subword of $\varphi(uv)$ and we have that $\varphi(u)$ is not primitivity-blocking.

    We must also show that if $\varphi(u)$ is not primitivity-blocking, neither is $u$.
    Let $\varphi(u)v$ be primitive and cyclically reduced such that there is no cancellation between $u$ and $v$.
    $\varphi^{-1}\left(\varphi(u)v\right) = u\varphi^{-1}(v)$ is then primitive.
    If $\varphi(u)\neq a^p$, then $v$ must be a positive word by Theorem \ref{basis}.
    If $\varphi(u) = a^p$, assume without loss of generality that $v=b$.
    $\varphi^{-1}$ sends $a$ to $ab^t$, so since $a^{-1}$ does not appear in $\varphi(u)v$, we have that $u\varphi^{-1}(v)$ is cyclically reduced.
    Furthermore, $\varphi(u)$ ends in $a$ or $ab$ and the first letter in $v$ must be either $a$ or $b$, so there is no cancellation between $u$ and $\varphi^{-1}(v)$.
    It follows that all of $u$ appears as a subword of $u\varphi^{-1}(v)$, so it cannot be primitivity-blocking.
\end{proof}

\begin{thm}\label{F2alg}
    There is an algorithm that decides, for a given word $u \in F_2$ of length $n$, whether or not $u$ is primitivity-blocking and has the worst-case time complexity $O(n^2)$.
\end{thm}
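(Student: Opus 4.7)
The plan is to build a recursive algorithm that iteratively shrinks $u$ using Lemma \ref{lem:throwaway} and Lemma \ref{lem:shrink}, terminating at a bounded base case that can be checked directly against the canonical forms of Theorem \ref{basis}.

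First, I would perform a symmetry normalization step. Primitivity-blocking is preserved under the constant-size family of transformations from Theorem \ref{basis} (swap $a \leftrightarrow b$, invert either generator), so it suffices to iterate over this family. For each symmetry $\sigma$, I test whether $\sigma(u)$ could be a subword of some positive cyclically reduced primitive word of the form $ab^{m_1}\cdots ab^{m_p}$ with $m_i \in \{t, t+1\}$. If in no symmetry class $u$ is positive (after a short-ends normalization), it cannot be a subword of any such primitive, so $u$ is primitivity-blocking. Otherwise, I may assume without loss of generality that $u$ is a positive word.

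In the main loop, I first apply Lemma \ref{lem:throwaway} to the leading and trailing $b$-syllables of $u$: I strip them if their exponent is strictly less than the maximum $b$-exponent $q$ occurring in $u$, and I prepend or append an $a$ if the exponent equals $q$. By that lemma, these operations do not change whether $u$ is primitivity-blocking. Next I compute $t = \min_i m_i$ over the internal $b$-syllables of the trimmed $u$. If $t \geq 1$, Lemma \ref{lem:shrink} tells me that $u$ is primitivity-blocking if and only if $\varphi(u)$ is, where $\varphi: a \mapsto ab^{-t}$; since $\varphi(u)$ is strictly shorter than $u$, I recurse on $\varphi(u)$. The recursion halts when $t = 0$, i.e., when two consecutive $a$'s appear, which gives a base case small enough to be resolved in $O(n)$ time by directly testing whether the simplified word still fits as a subword of some canonical form from Theorem \ref{basis}.

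For the complexity, each iteration performs an $O(n)$ scan to syllabify, trim, compute $t$, and apply $\varphi$, and it decreases the length of $u$ by at least $t$ times the number of $a$-syllables (hence by at least $1$ whenever the iteration actually fires). Thus the recursion depth is at most $n$, giving a worst-case running time of $O(n^2)$. The main obstacle I anticipate is the careful bookkeeping at the base case and at the symmetry-dispatch step: I must enumerate precisely which short positive words (once $t=0$) can still be extended to subwords of canonical primitives, and I must confirm that the second basis-element form $(b^{r_1} a b^{n_1} \cdots a b^{n_q} a b^{r_2})^{\varepsilon}$ from Theorem \ref{basis} does not introduce further subword possibilities not already captured by the canonical positive forms I check, so that failure across all symmetry branches genuinely certifies primitivity-blocking.
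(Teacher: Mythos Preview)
Your overall architecture matches the paper's: normalize via the symmetries of Theorem \ref{basis}, trim edge $b$-syllables with Lemma \ref{lem:throwaway}, then shrink with Lemma \ref{lem:shrink}. But there is a genuine gap in how the recursion is set up.

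First, Lemma \ref{lem:shrink} has the hypothesis $m_i \in \{t,t+1\}$; you cannot apply it with $t=\min_i m_i$ unless you have already verified that the $b$-exponents span at most two consecutive values. You must insert that check (and declare ``primitivity-blocking'' if it fails) before invoking the lemma. The paper does this explicitly as its Step~3.

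Second, and more seriously, your base case is not a base case. After one application of $\varphi\colon a\mapsto ab^{-t}$ to a word with $m_i\in\{t,t+1\}$, the new $b$-exponents lie in $\{0,1\}$, so you immediately hit $t=0$ and halt. But the resulting word---for instance $a^{k_1}ba^{k_2}b\cdots$ with the $k_j$ arbitrary---can still be as long as the input, and deciding whether it embeds in a canonical primitive is exactly the original problem with the roles of $a$ and $b$ exchanged. So your ``direct $O(n)$ test against Theorem \ref{basis}'' is not a bounded check; it is the full algorithm again. The paper resolves this by swapping $a\leftrightarrow b$ after the shrink (its Step~7) and re-entering the loop, so that the reduction genuinely iterates and the length-decrease argument gives the claimed $O(n)$ depth. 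Your recursion, as written, fires once and then punts the whole problem to the base case. The fix is exactly that swap-and-iterate structure; once you add it, your proposal coincides with the paper's algorithm.
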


\begin{proof}
    Call our algorithm $\mathcal{A}$ and let $F_2$ have generators $a$ and $b$.
    After each step we will replace $u$ by the result of that step and still call it $u$.

    \begin{steps}
    \item\label{step:1} $\mathcal{A}$ checks whether there is a generator that either only appears with the exponent $1$ throughout or the exponent $-1$ throughout.
    If not, then Theorem \ref{basis} implies that $u$ is primitivity-blocking, so we stop.
    Otherwise $\mathcal{A}$ swaps $a$ with $b$ and/or swaps $a$ with $a^{-1}$ if necessary so that $a$ only appears with the exponent $1$ in $u$.

    \item\label{step:2} $\mathcal{A}$ checks whether all the exponents on $b$ in $u$ have the same sign.
    If not, then Theorem \ref{basis} implies that $u$ is primitivity-blocking, so we stop.
    Otherwise $\mathcal{A}$ swaps $b$ with $b^{-1}$  if necessary to make all the exponents on $b$ positive.

    \item\label{step:3} $\mathcal{A}$ checks whether there exists an integer $t$ such that every exponent on $b$ in $u$ belongs to $\{t,t+1\}$ except for any times $b$ appears before the first occurrence of $a$ or after the last one.
    (If $b$ always appears to the same exponent, take that exponent to be $t+1$.) If not, then  $u$ is primitivity-blocking by Theorem \ref{basis}.

    \item\label{step:4} $\mathcal{A}$ checks whether there is an exponent on $b$ appearing before the first $a$ or after the last $a$, that is greater than $t+1$.
    If so, then $u$ is primitivity-blocking by Theorem \ref{basis}.

    \item\label{step:5} Suppose $b^k$ is the power of $b$ appearing before the first $a$.
    Then $\mathcal{A}$ checks whether $k$ is the largest exponent on $b$ appearing in $u$.
    If it is, then $\mathcal{A}$ replaces $u$ by $au$.
    Otherwise it replaces $u$ by $b^{-k}u$.
    This does not change the primitivity-blocking property  by Lemma \ref{lem:throwaway}.
    Again using Lemma \ref{lem:throwaway}, if the exponent on $b$ appearing after the last $a$ is less than $t$ (including if it is 0), then $\mathcal{A}$ changes that exponent to $t$.
    Note that at the end of this step, our word is of the form $ab^{m_1}\dots ab^{m_p}$, where each $m_i$ is in $\{t,t+1\}$.

    \item\label{step:6} $\mathcal{A}$ applies the automorphism $a \mapsto ab^{-t}$ to the word $u$ obtained as a result of Step \ref{step:5}.
    By Lemma \ref{lem:shrink}, this does not change whether or not $u$ is primitivity-blocking.

    \item\label{step:7} $\mathcal{A}$ swaps $a$ with $b$ and checks whether the word $u$ has two or fewer syllables.
    If it does, then it is of the form $b^m$ or $b^ma$ for some non-negative $m$.
    These are both subwords of $b^m a$, which is a cyclically reduced primitive word, so neither are primitivity-blocking and thus $\mathcal{A}$ stops and returns that $u$ is not primitivity-blocking.
    If the word $u$ has more than two syllables, then $\mathcal{A}$ goes to Step \ref{step:3}.
    \end{steps}

    Now we note that Step \ref{step:5} of the algorithm $\mathcal{A}$ adds at most $t+1$ to the length of the word and Step \ref{step:6} adds exactly $-pt$ to the length of the word.
    Thus, as long as $p$ is greater than $1$, repeating steps \ref{step:3} through \ref{step:7} decreases the length of the word.
    If $p=0$, the word must be either $a$ or the empty word after Step \ref{step:5}, and either way the algorithm will stop at Step \ref{step:7}.
    If $p=1$, then after Step \ref{step:5}, $u$ is of the form $ab^{m_p}$, so at Step \ref{step:7}, $u$ will have two or fewer syllables, so the algorithm will stop.
    Thus, $\mathcal{A}$ always terminates.
    Furthermore, it follows that each step takes at most linear time in $n$ (the length of the input word), and the number of repetitions of steps \ref{step:3} through \ref{step:7} is also bounded by $n$, so the worst-case time complexity of $\mathcal{A}$ is $O(n^2)$.
\end{proof}

\section{Blocking automorphic orbits}\label{block2}

Now we establish the main result of the paper.

\begin{thm}\label{blocking}
  Let $w\in F_r$ have cyclically reduced length $\ell$.
  Let $\{v_i\}_{i=1}^{\ell+1}$ be a sequence of primitivity-blocking words such that there is no cancellation between adjacent terms of the sequence, nor between $v_{\ell+1}$ and $v_1$.
  Then  $\prod_{i=1}^{\ell+1}v_i$ is $w$-orbit-blocking.
\end{thm}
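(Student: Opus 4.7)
Plan of proof. The strategy is a pigeonhole argument against \lemma{big boy}. Suppose for contradiction that the product $V = v_1 v_2 \cdots v_{\ell+1}$ occurs as a subword of the cyclic reduction of $\varphi(w)$ for some $\varphi \in \Aut(F_r)$. Since the cyclic reduction of $\varphi(w)$ is a conjugacy invariant of $\varphi(w)$, I may replace $w$ by its cyclic reduction, and I may compose $\varphi$ with an inner automorphism. Recalling that every basis has a \CCLR\ representative in its conjugacy class, I assume from now on that $w$ is cyclically reduced of length $\ell$ and that the basis $B = \{\varphi(x_1), \ldots, \varphi(x_r)\}$ is \CCLR.

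Write $w = y_1 y_2 \cdots y_\ell$ with each $y_j$ a generator or the inverse of a generator, so that $\varphi(w) = \varphi(y_1)\varphi(y_2)\cdots \varphi(y_\ell)$ is a concatenation of $\ell$ factors, each equal to a basis element or its inverse. Label each letter of this concatenation by its \emph{origin} $j \in \{1,\ldots,\ell\}$. Free reduction only deletes pairs of adjacent inverse letters, and removing an adjacent pair from a weakly monotone sequence of labels leaves it weakly monotone; cyclic reduction then trims a prefix and an equally long suffix. Consequently, the cyclic reduction $W'$ of $\varphi(w)$ decomposes, from left to right, into at most $\ell$ contiguous \emph{blocks}, the block of origin $j$ being a (possibly empty) subword of the single factor $\varphi(y_j) = \varphi(x_k)^{\pm 1}$.

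Now apply the pigeonhole. The at most $\ell$ blocks of $W'$ induce at most $\ell - 1$ internal block boundaries inside any linear subword, so at most $\ell - 1$ of the $\ell + 1$ pieces $v_i$ can straddle such a boundary. Hence some $v_{i_0}$ lies entirely inside a single block, and is therefore a subword of $\varphi(x_k)^{\pm 1}$ for some $k$. Since being primitivity-blocking is preserved under taking inverses (cyclically reduced primitive words are closed under inversion and subwords transform accordingly), either $v_{i_0}$ or its inverse is a primitivity-blocking subword of the basis element $\varphi(x_k)\in B$, contradicting \lemma{big boy}.

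The main obstacle is the block-structure statement in the middle paragraph: one has to check that free and then cyclic reduction of $\varphi(y_1)\cdots \varphi(y_\ell)$ cannot scramble the left-to-right order of origins, so that $W'$ is canonically cut into at most $\ell$ contiguous pieces, each a subword of a single basis element $\varphi(x_k)^{\pm 1}$. Once that is in hand, the numerical inequality $\ell + 1 > \ell$ forces some $v_{i_0}$ into a single factor, and \lemma{big boy} closes the argument; the hypothesis that the $v_i$ have no cancellation with each other (and $v_{\ell+1}$ with $v_1$) is used precisely to ensure that $V$ is itself a reduced word that could potentially appear as such a subword in the first place.
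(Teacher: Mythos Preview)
Your proof is correct and follows essentially the same strategy as the paper's: pass to a \CCLR\ automorphism, decompose the image of $w$ into $\ell$ pieces coming from basis elements (or their inverses), and pigeonhole to force some $v_i$ entirely into one piece, contradicting \lemma{big boy}. You are more explicit than the paper about why free and cyclic reduction respect this block structure, and you count boundaries linearly (at most $\ell-1$) rather than cyclically ($\ell$) as the paper does, but these are cosmetic differences; one small note is that the no-cancellation condition between $v_{\ell+1}$ and $v_1$ is not actually needed for your linear count, so your closing remark about its role is slightly off, though this does not affect correctness.
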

\begin{proof}
  Let $\varphi$ be an automorphism of $F_r$.
  Let $\bar \varphi$  be a \CCLR\ automorphism of the form $\gamma\circ\varphi$ where $\gamma$ is conjugation by some element of $F_r$.
  Suppose $w=a_1\dots a_\ell$, where each $a_i$ is a letter.
  Then each $\bar \varphi(a_j)$ does not contain any $v_i$ by Lemma \ref{lem:big boy}.
  Thus if some $v_i$ appears in $\bar \varphi(w)$, it cannot be contained entirely within any single $\bar \varphi(a_j)$ and thus must be contained partially in one and partially in an adjacent one.

  Consider $\bar \varphi(w)$ as a cyclic word.
  Since there are $\ell$ pairs of adjacent images of letters under $\bar \varphi$, there are at most $\ell$ non-intersecting words $v_i$ in $\bar \varphi(w)$.

  Note that $\varphi(w)$ and $\bar \varphi(w)$ may differ, but they are in the same conjugacy class, so the cyclic reduction of $\varphi(w)$ also contains at most $\ell$ non-intersecting subwords belonging to $\{v_i\}_{i=1}^{\ell+1}$.
  Therefore, no cyclically reduced element in the orbit of $w$ can contain the product of all $\ell+1$ words $v_i$.
\end{proof}

A sequence of primitivity-blocking words mentioned in the statement of Theorem \ref{blocking} can be built as follows.
Take a particular primitivity-blocking word, e.g.\ $u=x_1^2 x_2^2 \cdots x_r^2 x_1$ (see Section \ref{Back}).
Let $v_i=u$ for all $i=1, \ldots, k+1$.
This sequence of $v_i$ satisfies all conditions of Theorem \ref{blocking}.

\section{Average-case complexity of the Whitehead problem}\label{complexity}

Here we address the computational complexity of the following version of the Whitehead problem on automorphic equivalence in $F_r$: given a fixed $u \in F_r$, decide, on an input $v \in F_r$ of length $n$, whether or not $v$ is an automorphic image of $u$.
We show that the average-case complexity of this version of the Whitehead problem is constant if the input $v$ is a cyclically reduced word.
For a formal definition of the average-case complexity of an algorithm in the context of group theory we refer the reader to \cite{KMSS2}.

This version is a special case of the general Whitehead problem \cite{Wh} that asks, given two elements $u, v \in F_r$,  whether or not $u$ can be taken to $v$ by an automorphism of $F_r$.
The worst-case complexity of the Whitehead problem is unknown in general (cf.\ \cite{Problems}*{Problem (F25)}) but is at most quadratic in $\max(|u|, |v|)$ if $r=2$, see \cite{MS} and \cite{Khan}.

The version of the Whitehead problem that we consider here is different in that the input consists of just one element $v$, and the complexity of an algorithm that solves the problem is a function of the length of $v$, while the length of $u$ is considered constant.

To construct an algorithm with constant average-case complexity, we use the same approach that we used in \cite{rank2} in the special case where the rank of the ambient free group is 2.

Our algorithm will be a combination of two different algorithms running in parallel: one is fast but may be inconclusive, whereas the other one is conclusive but relatively slow.
This idea has been used for group theoretic algorithms since at least \cite{KMSS2}.

Before running the two algorithms, a pre-computation would be performed on $u$.
The algorithm would reduce the length of the fixed word $u$ by the process of ``Whitehead minimization''.
The minimization process successively applies elementary Whitehead automorphisms to the word that reduce its length until its length cannot be reduced any further.
This process takes worst-case quadratic time in $|u|$.
However, since $u$ is a fixed word, this amounts to constant time for the algorithm.
Denote the obtained element of minimum length in the orbit of $u$ by $\bar u$.
Once the word has been minimized, the two algorithms will be run in parallel on the result.

A fast algorithm $\mathcal{T}$ would detect a $\bar u$-orbit-blocking subword $B(\bar u)$ of a (cyclically reduced) input word $v$, as follows.
Let $n$ be the length of $v$.
The algorithm $\mathcal{T}$ would read the initial segments of $v$ of length $k$, $k=1, 2, \ldots,$ adding one letter at a time, and check if this initial segment has $B(\bar u)$ as a subword.
This takes time bounded by $C\cdot k$ for some constant $C$, see e.g.\ \cite{Knuth2}*{p.\ 338}.

The ``usual" Whitehead algorithm, call it $\mathcal{W}$, would minimize $|v|$ taking time quadratic in $|v|$.
Denote the obtained element of minimum length in the orbit of $v$ by $\bar v$.
If $|\bar v| \ne |\bar u|$, then $\mathcal{W}$ stops and reports that $v$ is not in the automorphic orbit of $u$.
If $|\bar v| = |\bar u|$, then the algorithm $\mathcal{W}$ would apply all possible sequences of elementary Whitehead automorphisms that do not change the length of $\bar v$ to see if any of the resulting elements are equal to $\bar u$.
This part may take exponential time in $|\bar v| = |\bar u|$, but since we consider $|u|$ constant with respect to $|v|$ and $|u|$ bounds $|\bar u|$ above, exponential time in $|\bar u|$ is still constant with respect to $|v|$.
Thus, the total time that the algorithm $\mathcal{W}$ takes is quadratic in $|v|$.

To address complexity of the algorithm $\mathcal{T}$, we use a result from
\cite{languages}:

\begin{lem}[\cite{languages}*{Example 1}]\label{csw}
The number of (freely reduced) words of length $L$ with (any number of) ``forbidden"  subwords (e.g.\ $u$-orbit-blocking subwords)  grows exponentially slower than the number of all freely reduced words of length $L$ does.
\end{lem}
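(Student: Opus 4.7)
The plan is to exhibit the language of freely reduced words over $F_r$ avoiding every word in a fixed finite set $S$ of forbidden subwords as a regular language whose exponential growth rate is strictly less than $2r-1$. First I would build a deterministic finite automaton $M$ whose states record (i) the last letter read, in order to enforce free reduction, and (ii) the Aho--Corasick state tracking the longest suffix of the word read so far that is a prefix of some $s\in S$. The accepted length-$L$ paths in $M$ are in bijection with the freely reduced words of length $L$ avoiding every $s\in S$, and so their count $A(L)$ satisfies $A(L)=\mathbf{u}^{\top}M^{L}\mathbf{v}$ for some non-negative integer transfer matrix $M$.

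Perron--Frobenius then governs the asymptotics. Writing $\lambda$ for the spectral radius of the dominant irreducible block of $M$, one has $A(L)\le C\lambda^{L}$. Forgetting the forbidden-subword constraint (but still enforcing free reduction) yields an automaton with transfer matrix $M_{0}\ge M$ entrywise, and the elementary count $2r(2r-1)^{L-1}$ identifies the spectral radius of $M_{0}$ as exactly $2r-1$. Hence $\lambda\le 2r-1$ automatically, and the task is to upgrade this to a strict inequality.

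For the strict inequality I would invoke the standard fact that for an irreducible non-negative matrix, strictly decreasing any positive entry strictly decreases the Perron eigenvalue, applied to each irreducible block of $M$ of maximal growth. The main obstacle is verifying, for each such block, that at least one transition deleted in passing from $M_{0}$ to $M$ actually lies inside the block---equivalently, that removing the forbidden transitions does not isolate a sub-automaton on which no $s\in S$ can ever be completed. Here is where the combinatorics of $F_{r}$ and the explicit shape of $S$ enter: since $r\ge 2$ and each $s\in S$ has length at least two, one can show that in any sufficiently large strongly connected sub-automaton of $M_{0}$, some $s\in S$ is realized along a cyclic path, so at least one of its transitions has been removed in $M$. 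Taking the maximum over the finitely many blocks then yields $\lambda<2r-1$, and consequently $A(L)/(2r-1)^{L}\to 0$ exponentially fast, which is precisely the statement of the lemma.
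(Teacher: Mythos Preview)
The paper does not prove this lemma: it is quoted as Example~1 of \cite{languages}, so there is no in-paper argument to compare against. Your sketch is therefore an independent proof attempt, and the overall strategy---realise the avoiding language by a deterministic automaton, pass to a non-negative transfer matrix, and show its spectral radius is strictly below $2r-1$---is sound and standard.

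The step you yourself flag as ``the main obstacle'' is where the argument is genuinely incomplete. Two concrete issues. First, $M$ and $M_0$ as you describe them have different state sets (adding the Aho--Corasick component enlarges the automaton), so the entrywise comparison $M_0\ge M$ must be set up more carefully---for instance by taking $M_0$ to be the \emph{same} product automaton with the forbidden transitions restored. Second, deleting transitions can change the irreducible block decomposition, so reasoning about blocks of $M_0$ does not directly control the blocks of $M$; your sentence about ``sufficiently large strongly connected sub-automata of $M_0$'' does not close this gap. A clean fix is available: every state of $M$ has out-degree at most $2r-1$, so an irreducible block $C$ of $M$ with $\rho(C)=2r-1$ would have every row sum equal to $2r-1$ (by the min/max row-sum bounds for irreducible non-negative matrices); hence no outgoing transition from any state of $C$ was deleted and all such transitions land back in $C$, so from any state of $C$ one can read an arbitrary freely reduced word inside $C$, in particular a forbidden $s$---contradiction. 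Alternatively, one can bypass Perron--Frobenius altogether: partition a uniformly random freely reduced word of length $L$ into $\lfloor L/(|s|+1)\rfloor$ consecutive blocks and observe that each block contains a fixed forbidden $s$ with conditional probability at least some $p>0$ given the past, which yields $A(L)\big/\bigl(2r(2r-1)^{L-1}\bigr)\le(1-p)^{\lfloor L/(|s|+1)\rfloor}$ directly.
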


In our situation, we have at least one $B(u)$ as a forbidden subword.
Therefore, the probability that the initial segment of length $k$ of the word $v$ does not have $B(u)$ as a subword is $O(s^k)$ for some $s, ~0<s<1$.
Thus, the average time complexity of the algorithm $\mathcal{T}$ is bounded by
$\sum_{k=1}^n  C\cdot k\cdot s^k,$ which is bounded by a constant.

\begin{thm}\label{average-case}
Suppose possible inputs of the above algorithm $\mathcal{A}$ are cyclically reduced words that are selected uniformly at random from the set of cyclically reduced words of length $n$.
Then the average-case time complexity (i.e.\ expected runtime) of the algorithm $\mathcal{A}$, working on a classical Turing machine, is $O(1)$, a constant that does not depend on $n$.
If one uses the ``Deque" (double-ended queue) model of computing \cite{deque} instead of a classical Turing machine, then the ``cyclically reduced" condition on the input can be dropped.
\end{thm}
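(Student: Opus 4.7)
The plan is to bound the expected runtime of the parallel combination $\mathcal{A}$ of $\mathcal{T}$ and $\mathcal{W}$ by showing that $\mathcal{T}$ terminates after $O(1)$ expected work, so $\mathcal{A}$ itself has constant expected runtime on a random input of length $n$. After the constant-time pre-computation that produces $\bar u$ and a $\bar u$-orbit-blocking word $B(\bar u)$ (whose existence is guaranteed by Theorem \ref{blocking}), I would launch $\mathcal{T}$ and $\mathcal{W}$ in parallel on $v$ and halt the moment either reports.

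For the fast algorithm $\mathcal{T}$, let $T(v)$ be the smallest $k$ such that $B(\bar u)$ occurs as a subword of the length-$k$ prefix of $v$. The work done while processing the initial segment of length $k$ is $O(k)$, and $\mathcal{T}$ terminates as soon as $k = T(v)$. Since $v$ is sampled uniformly from the set of cyclically reduced words of length $n$, and every prefix of such a word is freely reduced, Lemma \ref{csw} supplies constants $s \in (0,1)$ and $C_0$, depending only on $u$, with $\Pr[T(v) > k] \le C_0 s^k$. Hence
\[
E[\,\text{runtime of }\mathcal{T}\,] \;\le\; \sum_{k=1}^{n} C k \cdot \Pr[T(v) > k-1] \;\le\; C C_0 \sum_{k=1}^{\infty} k s^{k-1} \;=\; O(1).
\]

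Since $\mathcal{A}$ halts at the first termination, its runtime is at most $\min(\text{time of }\mathcal{T}, \text{time of }\mathcal{W})$. The only contribution not already controlled by the bound above is the event $\{T(v) > n\}$, of probability $O(s^n)$; on that event $\mathcal{W}$ still finishes in $O(n^2)$, contributing $O(n^2 s^n) = o(1)$ to the expectation. Combining gives $E[\,\text{runtime of }\mathcal{A}\,] = O(1)$, independent of $n$. For the Deque extension, I would observe that a double-ended queue can cyclically reduce any input in amortized $O(1)$ time per letter (peel matching inverse pairs from the two ends), so $\mathcal{A}$ may accept arbitrary inputs, cyclically reduce them on the fly, and then invoke the analysis above; the exponential decay of the prefix distribution is preserved by this preprocessing.

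The main obstacle I anticipate is verifying that Lemma \ref{csw} transfers from the uniform distribution on freely reduced words to the uniform distribution on cyclically reduced words: one must confirm that the induced distribution of the length-$k$ prefix of a uniform length-$n$ cyclically reduced word still satisfies $\Pr[\text{prefix avoids }B(\bar u)] = O(s^k)$. This should be routine because the count of cyclically reduced length-$n$ words in $F_r$ agrees with the count of freely reduced length-$n$ words up to a bounded factor, so the two prefix distributions differ by a bounded constant for each fixed $k < n$, and the exponential tail bound is preserved.
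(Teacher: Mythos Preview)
Your proposal is correct and follows essentially the same approach as the paper: run $\mathcal{T}$ and $\mathcal{W}$ in parallel, use Lemma~\ref{csw} to get the exponential tail bound $\Pr[T(v)>k]=O(s^k)$, and sum to obtain $O(1)$ expected runtime. In fact you spell out two points the paper leaves implicit or defers to \cite{rank2}---the residual $O(n^2 s^n)$ contribution when $\mathcal{T}$ fails to terminate and the passage from the freely-reduced to the cyclically-reduced uniform distribution---so your argument is, if anything, more complete than the paper's own short proof.
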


\begin{proof}
  As stated above, given the existence of orbit-blocking words for all groups $F_r$, where $r \geq 2$ (according to Theorem \ref{blocking}), we can proceed essentially as in the proof of Theorem 3 in \cite{rank2}, with only minimal modifications needed.
  For instance, the first statement quickly follows from Lemma \ref{csw}.
\end{proof}

\subsection{Reducing complexity further}

While the above offers a theoretically fast algorithm, one may be skeptical about its  practical performance.
The algorithm $\mathcal{A}$ has constant time average-case complexity, but it appears that this constant may be quite large. Indeed, our proof of Theorem \ref{blocking} offers 
$w$-orbit-blocking words of length $(2r+1)\cdot |w|$, and thus it is impossible to speed up the whole algorithm $\mathcal{A}$ until the fast part of the algorithm reads a prefix that exceeds this length.
We will offer a few ways to improve practical performance of the fast part of the algorithm.

The first idea is to use shorter primitivity-blocking words to construct our orbit-blocking words.
In Theorem \ref{thm:shortpb} we give the shortest possible primitivity-blocking word, however this word has cancellation when multiplied by itself, and so we cannot simply raise it to a power.
In order to prevent cancellation we can use $v_i=x_1x_2\dots x_r^2\dots x_2x_1^{-1}$ when $i$ is even and  $v_i=x_1^{-1}x_2\dots x_r^2\dots x_2x_1$ when $i$ is odd.

Another option is to decrease the number of primitivity-blocking words required.
By applying Theorem \ref{thm:CCLR max 1} in place of Lemma \ref{lem:big boy}, we have the following theorem:

\begin{thm}\label{thm:blocking2}
  Let $w\in F_r$, $r\geq 3$, be such that its cyclic reduction can be written as a product of $k$ slender words.
  Let $\{v_i\}_{i=1}^{k+1}$ be a sequence of primitivity-blocking words such that there is no cancellation between adjacent terms of the sequence, nor between $v_{k+1}$ and $v_1$.
  Then  $\prod_{i=1}^{k+1}v_i$ is $w$-orbit-blocking.
\end{thm}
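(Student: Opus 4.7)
The plan is to imitate the proof of Theorem~\ref{blocking} essentially verbatim, with two changes: replace the factorization of $w$ into single letters by a factorization of the cyclic reduction $w_{cr}$ into $k$ slender factors, and replace the appeal to Lemma~\ref{lem:big boy} by an appeal to Theorem~\ref{thm:CCLR max 1}. This substitution is legal precisely because single letters are themselves slender, so Theorem~\ref{thm:blocking2} is genuinely a refinement of Theorem~\ref{blocking}, tightened whenever $w_{cr}$ admits a decomposition into fewer than $\ell$ slender factors.

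Concretely, given an arbitrary $\varphi \in \Aut(F_r)$, I would first compose $\varphi$ with an inner automorphism to obtain a \CCLR\ automorphism $\bar\varphi$ in the same conjugacy class; this does not change the cyclic reduction of $\varphi(w)$. Writing $w_{cr} = s_1 s_2 \cdots s_k$ with each $s_i$ slender, one then has
\[
  \bar\varphi(w_{cr}) = \bar\varphi(s_1)\,\bar\varphi(s_2)\,\cdots\,\bar\varphi(s_k).
\]
Since $r \geq 3$, Theorem~\ref{thm:CCLR max 1} applies to each slender $s_i$ and guarantees that no $\bar\varphi(s_i)$ contains a primitivity-blocking subword. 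Therefore every occurrence of a primitivity-blocking word inside $\bar\varphi(w_{cr})$ must straddle one of the $k$ boundaries between consecutive factors (taken cyclically), so $\bar\varphi(w_{cr})$ contains at most $k$ pairwise non-overlapping primitivity-blocking subwords.

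To conclude, observe that $\bar\varphi(w)$ and $\varphi(w)$ are conjugate, hence share the same cyclic reduction, and the cyclic reduction of $\bar\varphi(w)$ coincides with that of $\bar\varphi(w_{cr})$. Thus the cyclic reduction of $\varphi(w)$ also admits at most $k$ pairwise disjoint primitivity-blocking subwords. On the other hand, the no-cancellation hypotheses on the sequence $\{v_i\}_{i=1}^{k+1}$ (including cyclically between $v_{k+1}$ and $v_1$) imply that an occurrence of $\prod_{i=1}^{k+1} v_i$ as a subword of a cyclic word would furnish $k+1$ pairwise disjoint primitivity-blocking subwords, yielding a contradiction. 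Hence $\prod_{i=1}^{k+1} v_i$ is $w$-orbit-blocking.

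The main subtlety, inherited directly from the proof of Theorem~\ref{blocking}, is the bookkeeping that lets one pass between the possibly non-reduced product $\bar\varphi(s_1)\cdots\bar\varphi(s_k)$ and its cyclic reduction: cancellation between adjacent $\bar\varphi(s_i)$ can only shorten the word and cannot create new primitivity-blocking subwords that were not already present, so the upper bound of $k$ on disjoint primitivity-blocking subwords is preserved under cyclic reduction. Apart from this, the argument is a direct transcription of Theorem~\ref{blocking} with $k$ in place of $\ell$.
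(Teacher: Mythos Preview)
Your proposal is correct and matches the paper's own approach exactly: the paper simply states that the proof follows in the same manner as that of Theorem~\ref{blocking}, applying Theorem~\ref{thm:CCLR max 1} rather than Lemma~\ref{lem:big boy}, which is precisely the substitution you carry out. Your added remarks about the bookkeeping under free and cyclic reduction are accurate and make explicit a point the paper leaves implicit in the proof of Theorem~\ref{blocking}.
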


The proof follows in the same manner as that of Theorem \ref{blocking}, applying Theorem \ref{thm:CCLR max 1} rather than Lemma \ref{lem:big boy}.

Every word of length $\leq 2r-1$ must necessarily be slender.
Thus partitioning a word into chunks of that size (or smaller) splits it as a product of slender words.
For a word of length $\ell$, Theorem \ref{blocking} implies there is an orbit-blocking word of length $2r\left(\ell+1\right)$ (using the short primitivity-blocking words as described above).
Comparatively, by Theorem \ref{thm:blocking2} a word of length $\ell$ has an orbit-blocking word of length at most $2r\left(\left\lceil\dfrac{\ell}{2r-1}\right\rceil+1\right)$.

Lastly, one could look for multiple orbit-blocking words simultaneously.
Theorems \ref{blocking} and \ref{thm:blocking2} in essence give a limit on the number of disjoint primitivity-blocking words that can appear in the orbit of a given word.
An alternative algorithm could simply count primitivity-blocking words in the input until the number exceeds the bound given by the fixed word.
This highlights the significance of Problem \ref{prb:detectpb}.


\begin{thebibliography}{ABC}


\bibitem{Ascari}
D.\ Ascari, {\it A fine property of Whitehead's algorithm},
Groups Geom.\ Dyn.\ {\bf 18} (2024), 235--264.

\bibitem{Bardakov}
V.\ Bardakov, V.\ Shpilrain and V.\ Tolstykh,
{\it On the palindromic and primitive widths of a free group},
J.\ Algebra {\bf 285} (2005), 574--585.

\bibitem{Problems}
G.\ Baumslag, A.\ G.\ Myasnikov, V.\ Shpilrain, {\it Open problems in
combinatorial group theory},\\
\url{http://shpilrain.ccny.cuny.edu/gworld/problems/oproblems.html}

\bibitem{languages}
T.\ Ceccherini-Silberstein, W.\ Woess, {\it  Growth and ergodicity of context-free languages}, Trans.\ Amer.\ Math.\ Soc.\ {\bf  354} (2002), 4597--4625.

\bibitem{Cohen}
M.\ Cohen, W.\ Metzler, A.\ Zimmermann, {\it What does a basis of F(a,b) look like?}, Math.\ Ann.\ {\bf 257} (1981), 435--445.

\bibitem{deque}
{\it Double-ended queue}, Wikipedia (2024), \url{https://en.wikipedia.org/w/index.php?title=Double-ended_queue&oldid=1225841227}



\bibitem{Heusener}
M.\ Heusener, R.\ Weidmann, {\it A remark on Whitehead's cut-vertex lemma},  J.\ Group Theory {\bf 22} (2019), 15--21.

\bibitem{rank2}
L.\ Hyde, S.\ O'Connor, V.\ Shpilrain, {\it Orbit-blocking words and the average-case complexity of Whitehead's problem in the free group of rank 2},  J.\ Group Theory {\bf 28} (2025),  423--430.


\bibitem{KMSS2}
I.\ Kapovich, A.\ G.\ Myasnikov, P.\ Schupp, V.\ Shpilrain,
{\it  Average-case complexity and decision problems in group theory},
 Advances in Math.\ {\bf 190} (2005), 343--359.

\bibitem{kssgen}
I.\ Kapovich, P.\ Schupp, V.\ Shpilrain, {\it Generic properties of Whitehead's algorithm and isomorphism rigidity of random one-relator groups}, Pacific J.\ Math.\ {\bf 223} (2006), 113--140.


\bibitem{Khan}
B.\ Khan, {\it The structure of automorphic conjugacy in the free group of rank 2}, Contemp.\ Math.\ Amer.\ Math.\ Soc.\ {\bf  349} (2004), 115--196.


\bibitem{Knuth2}
D.\ Knuth, J.\ H.\ Morris, V.\ Pratt, {\it Fast pattern matching in strings}, SIAM Journal on Computing  {\bf 6} (2) (1977), 323--350.\\
\url{https://en.wikipedia.org/wiki/index.php?title=Knuth\%E2\%80\%93Morris\%E2\%80\%93Pratt_algorithm&oldid=1246760417}

\bibitem{MS}
A.\ Myasnikov and V.\ Shpilrain, {\it Automorphic orbits in free groups}, J.\ Algebra {\bf 269} (2003), 18--27.



\bibitem{Shpilrain2023}
V.\ Shpilrain, {\it Average-case complexity of the Whitehead problem for free groups}, Comm.\ Algebra {\bf 51} (2023), 799--806.

\bibitem{Wh}
J.~H.~C.~Whitehead, {\it On equivalent sets of elements in free
groups}, Ann.\ of Math.\ \textbf{37} (1936), 782--800.


\bibitem{Wh2}
J.~H.~C.~Whitehead, {\it On certain sets of elements in a free group}, Proc. London Math. Soc. \textbf{41} (1936), 48--56.

\end{thebibliography}
\end{document}